\newtheorem{theorem}{Theorem}
\newtheorem{proposition}[theorem]{Proposition}
\newtheorem{observation}[theorem]{Observation}
\theoremstyle{remark}
\newtheorem*{remark}{Remark}
\newtheorem*{remarks}{Remarks}
\numberwithin{equation}{section}
\def \sc {$\mathrm{SC}(k,r)$}
\def \sckr #1#2 {$\mathrm{SC}(#1,#2)$}
\def \tr {$\mathsf{tr}(k,r)$}
\def \trkr#1#2{\mathsf{tr}(#1,#2)}
\def \trNk#1#2{\mathsf{tr^*}(#1,#2)}
\def \tabc {$\Delta(a,b,c)$}
\def\coef#1{[#1]}
\def\Res{\operatorname{Res}}
\let\start@align@nopar\start@align
\let\start@gather@nopar\start@gather
\let\start@multline@nopar\start@multline
\long\def\start@align{\par\start@align@nopar}
\long\def\start@gather{\par\start@gather@nopar}
\long\def\start@multline{\par\start@multline@nopar}
\begin{document}

\title[Counting triangulations of subdivided convex polygons]
{Counting triangulations of some classes \\ of subdivided convex polygons}

\author[A. Asinowski, C. Krattenthaler and T. Mansour]
{Andrei Asinowski$^*$, Christian Krattenthaler$^\dagger$ and Toufik Mansour$^\ddagger$}

\address{$^*$ Institut f\"ur Diskrete Mathematik und Geometrie,
Technische Universit\"{a}t Wien.
Wiedner Hauptstra\ss e 8--10, A-1040 Vienna, Austria.\newline
WWW: {\tt \url{http://dmg.tuwien.ac.at/asinowski/}}.}

\address{$^{\dagger}$ Fakult\"at f\"ur Mathematik, Universit\"at Wien.
Oskar-Morgenstern-Platz~1, A-1090 Vienna, Austria.
WWW: {\tt \url{http://www.mat.univie.ac.at/\~kratt/}}.}

\address{$^\ddagger$ Department of Mathematics, University of Haifa.
Abba Khoushy Ave 199, Mount Carmel, Haifa 3498838, Israel.
WWW: {\tt \url{http://math.haifa.ac.il/toufik/}}.}

\thanks{$^*$ Research supported by the Austrian
Science Foundation FWF, grant S50-N15,
in the framework of the Special Research Program
``Algorithmic and Enumerative Combinatorics".\newline\indent
$^\dagger$ Research partially supported by the Austrian
Science Foundation FWF, grant S50-N15,
in the framework of the Special Research Program
``Algorithmic and Enumerative Combinatorics".}


\begin{abstract}
We compute the number of triangulations of a convex $k$-gon
each of whose sides is subdivided by $r-1$ points.
We find explicit formulas and generating functions,
and we determine the asymptotic behaviour of these numbers
as $k$ and/or $r$ tend to infinity.
We connect
these results with the question of finding
the planar set of points in general position
that has the minimum possible number of triangulations ---
a well-known open problem from computational geometry.
\end{abstract}

\keywords{Geometric graphs, triangulations, generating functions,
asymptotic analysis, Chebyshev polynomials, saddle-point method.}

\maketitle

\section{Introduction}

Let $k$ and $r$ be two natural numbers, $k\geq 3$, $r \geq 1$.
Let \sc\ denote a convex $k$-gon in the plane
each of whose sides is subdivided by $r-1$ points.
(Thus, the whole configuration consists of $kr$ points.)
In what follows, the exact measures are not essential:
without loss of generality, we may consider
a regular $k$-gon with sides
subdivided by evenly spaced points.
The $k$ vertices of the original (``basic'') $k$-gon
will be called \emph{corners}, and they will be denoted
(say, clockwise)
by $P_{0,0}, P_{1,0}, \dots, P_{k-1,0}$
(with arithmetic modulo~$k$ in the first index, so that
$P_{k,0}=P_{0,0}$).
The $r-1$ points that subdivide the segment $P_{i,0} P_{i+1,0}$
(oriented from $P_{i,0}$ to $P_{i+1,0}$)
will be denoted by
$P_{i,1}, P_{i,2}, \dots, P_{i,r-1}$
(we shall also occasionally write $P_{i,r}$ for $P_{i+1,0}$).
The subdivided segments $P_{i,0} P_{i+1,0}$ --- that is,
the point sequences of the form
$P_{i,0}, P_{i,1}, P_{i,2}, \dots, P_{i,r-1}, P_{i+1,0}$ ---
will be referred to as \emph{strings}.
Thus, the boundary of  \sc\ consists of $k$ strings,
and each corner belongs to two strings.
The reader is referred to Figure~\ref{fig:ex_hex} for an illustration.
For brevity, a convex polygon with subdivided edges
(not all of them necessarily subdivided by the same number of points) 
will be referred to as a
\emph{subdivided convex polygon}.
A subdivided convex polygon is \textit{balanced} if 
(as described above) all its sides are subdivided by the same number of points.

\begin{figure}
\begin{center}
\includegraphics[scale=0.85]{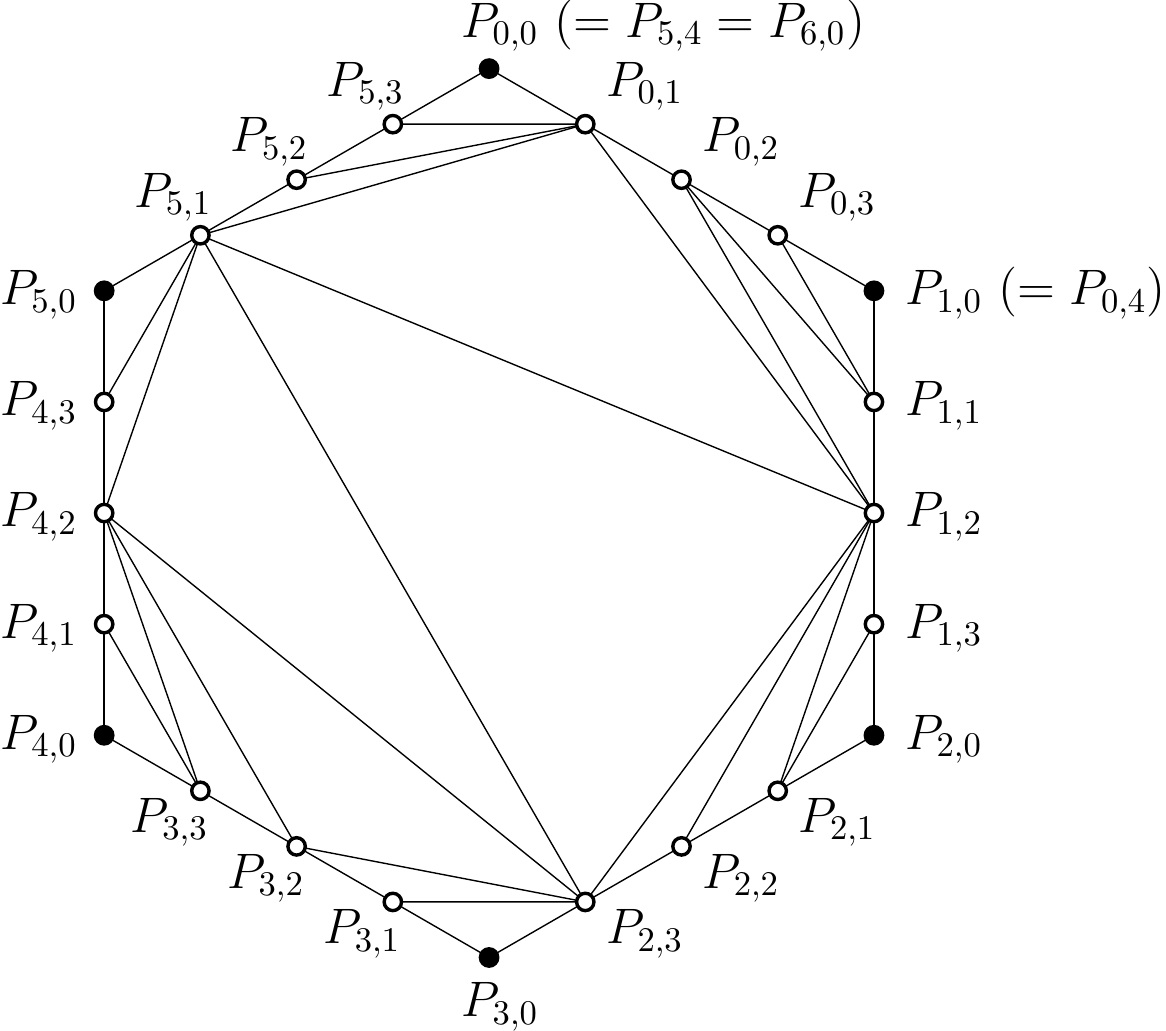}
\end{center}
\caption{The subdivided convex polygon \sckr{6}{4} \ and one of its
  triangulations.}
\label{fig:ex_hex}
\end{figure}

A \textit{triangulation} of a finite planar point set $\mathrm{S}$
is a dissection of its convex hull by non-crossing diagonals\footnote{
By a ``diagonal" we mean a straight-line segment connecting two points 
of the set $\mathrm{S}$.} into triangles.
We emphasize that maximal triangulations are meant;
in particular, no triangle can have another point of the set in the
interior of one of its sides.
The set of triangulations of a point set $\mathrm{S}$
will be denoted by $\mathsf{TR}(\mathrm{S})$.

Triangulations of (structures equivalent or related to)
subdivided convex polygons have appeared in earlier work.
Hurtado and Noy \cite{hn} considered triangulations of \emph{almost convex
  polygons}, which turn out to be equivalent to subdivided convex polygons
according to our terminology.
They dealt with the non-balanced case --- that is,
$k$-gons whose sides
are subdivided, but not necessarily into the same number of points.
In particular, Hurtado and Noy derived an
inclusion-exclusion formula for the number of triangulations of
a subdivided convex $k$-gon whose sides are subdivided by $a_1, a_2,
\dots, a_k$ points, and they showed that this number is independent
of the specific distribution of the subdivisions among the sides
of the basic $k$-gon.
On the other hand, Bacher and Mouton~\cite{b, bm} considered triangulations of more general
\emph{nearly convex polygons} defined as
infinitesimal perturbations of subdivided convex polygons.
They derived a formula for the number of triangulations of such polygons
in terms of certain polynomials that depend on the shape of chains.

The main purpose of the present paper is to present enumeration
formulas and precise asymptotic results for the number of triangulations
of a subdivided convex polygon in the balanced case, that is, where
each side of the polygon is subdivided into the same number of points.
Our enumeration formulas are more compact than those of Hurtado and Noy
or of Bacher and Mouton when specialised to the balanced case.
We shall as well provide formulas for some non-balanced cases.

Let us denote the number of triangulations of \sc \ by \tr.
For $r=1$ our configuration is just a convex $k$-gon,
and, thus,
$\trkr {k} {1} =C_{k-2}$,
where $C_n=\frac{1}{n+1}\binom{2n}{n}$
is the $n$th Catalan number.
It is easy to find $\trkr{k}{r}$ for small values of $k$ and $r$ by inspection.
For example, we have
$\trkr{3}{2}=4$,
$\trkr{3}{3}=29$
and
$\trkr{4}{2}=30$;
see Figure~\ref{fig:b24} (there, symmetries must also be taken into account;
for each triangulation it is shown how many different triangulations can be
obtained from it under symmetries).
\begin{figure}
\begin{center}
\includegraphics[scale=0.8]{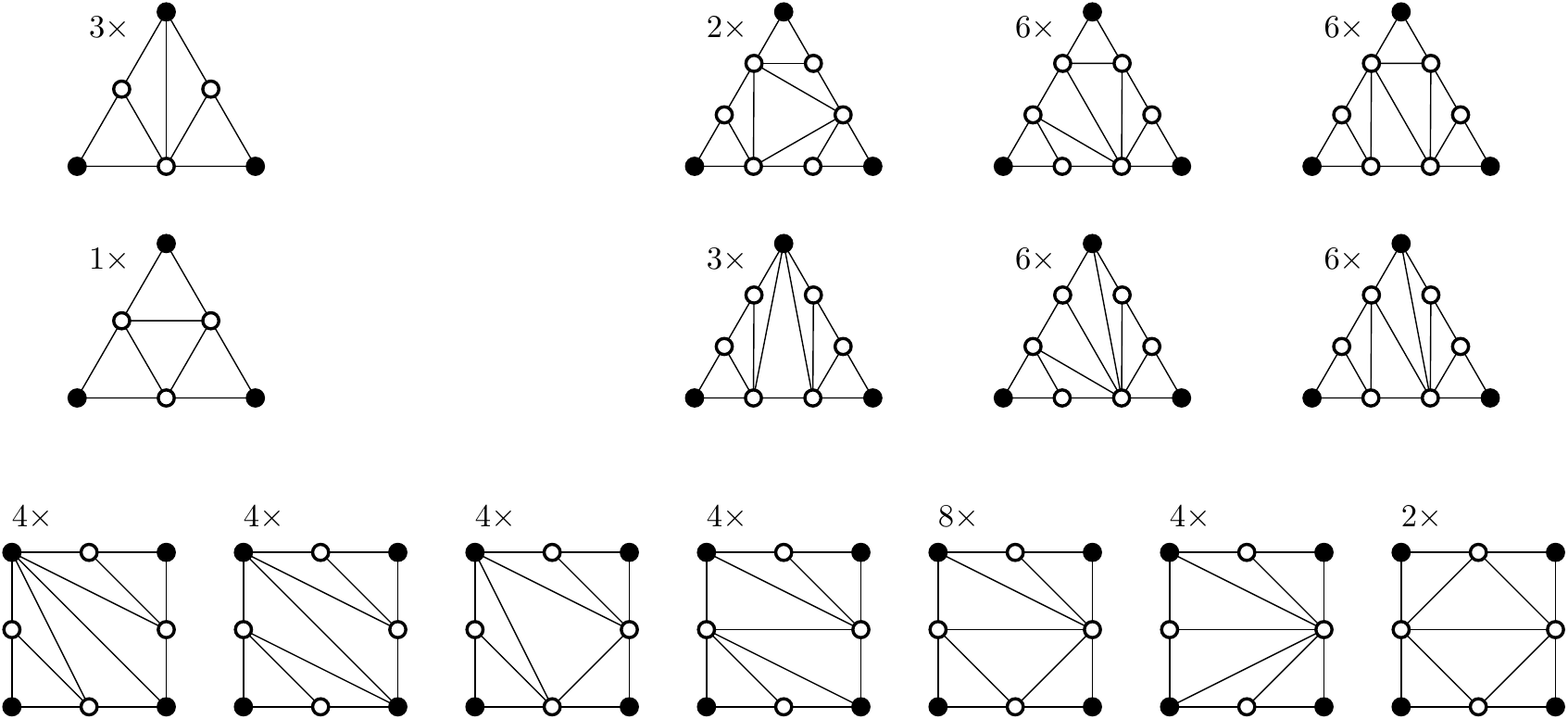}
\end{center}
\caption{All triangulations of \sckr{3}{2} ,
  \sckr{3}{3} \ and \sckr{4}{2} .}
\label{fig:b24}
\end{figure}
Values of $\trkr{k}{r}$
for $1\leq k \leq 7$, $1\leq r \leq 6$
are shown in Table~\ref{tab:values};
the meaning of these values for
$k=2$ --- the central binomial coefficients ---
will be explained in Section~\ref{sec:formula}
(see the remark after the proof of Theorem~\ref{thm:Formel}).
The sequence
$(\trkr{k}{2})_{k\geq 3}$ is OEIS/A086452, while
the sequence
$(\trkr{3}{r})_{r\geq 1}$ is OEIS/A087809~\cite{oeis}.

In the next section, we derive our formulas for the numbers $\trkr{k}{r}$.
They are given in the form of double sums, see Theorem~\ref{thm:Formel},
thus answering an open question posed in~\cite{hn}.
These formulas come from a representation of $\trkr{k}{r}$ in terms of
a complex contour integral (see Proposition~\ref{prop:Formel}), when
interpreted as a coefficient extraction formula. We use this integral
representation to prove in Section~\ref{sec:GF} that the ``vertical"
generating functions $\sum_{k\ge2}\trkr{k}{r}x^k$ as well as
the ``horizontal" generating functions
$\sum_{r\ge1}\trkr{k}{r}x^r$ are all algebraic. More precisely,
we find
explicit expressions for these generating functions in terms of roots
of certain (explicit) polynomials.
We devote a separate section, Section~\ref{sec:k=3}, to the special case
$k=3$, since in that case several alternative formulas that are
more attractive than the formulas in Theorem~\ref{thm:Formel} are available.
Moreover, in Section~\ref{sec:abc} we also
consider the {\it non-balanced\/} case of $k=3$:
we count triangulations of a triangle
whose sides are subdivided by $a$, $b$, and $c$ points, respectively.
The resulting compact formulas are presented in
Propositions~\ref{prop:abc_all} and~\ref{prop:abc_bl}.
Then, in Section~\ref{sec:asy},
we determine the asymptotic behaviour of $\trkr{k}{r}$ as $r$ and/or
$k$ tend to
infinity, see Theorems~\ref{thm:rinf} and~\ref{thm:kinf}.
This is achieved by transforming the contour integral into a complex
integral
along a line in the complex plane parallel to the imaginary axis that
passes through the saddle point of the integrand.
In the final Section~\ref{sec:dc},
we connect our results with
a well-known open problem from computational geometry:
the problem of
determining a planar set of $n$ points in general position
with the minimum number of triangulations.
We show that our results support
a conjecture of Aichholzer, Hurtado and Noy \cite{ahn} that this minimum is
attained by the so-called \textit{double circle}.

\begin{table}\label{tab:values}
\begin{center}
\scalebox{0.91}{
\begin{tabular}{|r||c|c|c|c|c|c|c|c|}
\hline
 & $r=1$ & $2$ & $3$ & $4$ & $5$ & $6$ \\ \hline\hline  
 $k=2$ & $1$ & $1$ & $2$ & $6$ & $20$ & $70$ \\ \hline  
$3$ & $1$ & $4$ & $29$ & $229$ & $1847$ & $14974$ \\ \hline  
$4$ & $2$ & $30$ & $604$ & $12168$ & $238848$ & $4569624$ \\ \hline  
$5$ & $5 $ & $250 $ & $13740 $ & $699310 $ & $33138675 $ & $1484701075
$ \\ \hline  
$6$ & $14 $ & $2236 $ & $332842 $ & $42660740 $ & $4872907670 $ &
$510909185422 $ \\ \hline  
$7$ & $42 $ & $20979 $ & $8419334 $ & $2711857491 $ & $745727424435 $
& $182814912101920 $ \\ \hline 
\end{tabular}
}
\end{center}
\caption{Values of $\trkr{k}{r}$ for $1\leq k \leq 7$, $1\leq r \leq 6$.}
\end{table}

\section{A formula for $\trkr{k}{r}$}\label{sec:formula}

In this section we derive
two --- very similar --- double sum formulas
for $\trkr{k}{r}$, given in \eqref{eq:Formel1} and \eqref{eq:Formel2}.
Starting point for finding these double sum expressions
is the inclusion-exclusion
formula \eqref{eq:b2}, which is equivalent to that found
in~\cite{hn} and in~\cite{b, bm}.
We include its derivation for the sake of completeness.

We start by ``inflating'' \sc.
That is, we replace its strings
by slightly curved circular arcs so that
a set of $kr$ points in convex position is obtained.
We keep the labels for these points.
Denote this point set by $\mathrm{C}(k  \cdot  r)$.
It is easy to see that each triangulation of \sc\
is transformed into
a triangulation of $\mathrm{C}(k  \cdot  r)$,
see Figure~\ref{fig:ex_hex_inj}.
\begin{figure}
\begin{center}
\includegraphics[scale=0.8]{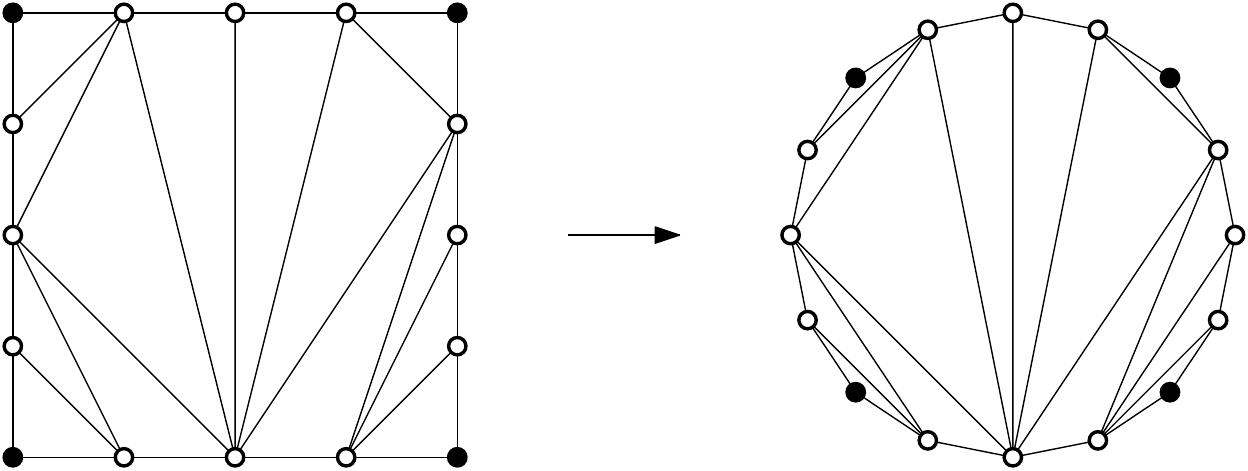}
\end{center}
\caption{Injection
$\varphi_{k,r}$ from
$\mathsf{TR}(\mathrm{SC}(k,r))$ to
$\mathsf{TR}(\mathrm{C}(k  \cdot  r))$}
\label{fig:ex_hex_inj}
\end{figure}
More formally, this ``inflation" defines a natural injection
$\varphi=\varphi_{k,r}$ from
$\mathsf{TR}(\mathrm{SC}(k,r))$ to
$\mathsf{TR}(\mathrm{C}(k  \cdot  r))$:
for each $D \in \mathsf{TR}(\mathrm{SC}(k,r))$,
triangulation $\varphi(D) \in \mathsf{TR}(\mathrm{C}(k \cdot r))$
uses the diagonals with the same labels as $D$.
Thus $\trkr{k}{r}$ is the size of the image of $\varphi$.
We say that a triangulation of
$\mathrm{C}(k \cdot r)$ is \textit{legal} if it belongs to the image
of $\varphi$ ---
that is, corresponds to a (unique) triangulation of \sc.
It is easy to see the following.

\begin{observation}\label{obs:legal_tr}
Let $T$ be a triangulation of\/ $\mathrm{C}(k \cdot r)$.
$T$ is legal if and only if it uses
no diagonal whose endpoints belong to the same string
(that is, to the set $\{P_{i,0}, P_{i,1}, \dots,\break P_{i,r-1},
P_{i+1,0}\}$ for some $i$).
\end{observation}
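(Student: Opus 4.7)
The plan is to verify both directions of the equivalence directly from the definition of the injection~$\varphi$.

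For the forward direction, suppose $T=\varphi(D)$ for some triangulation $D$ of \sc. Since the $r+1$ points of any single string are collinear, lying along the corresponding side of the basic $k$-gon, a straight-line segment whose two endpoints are non-adjacent points of one string contains at least one other string point in its interior. Such a segment cannot appear as an interior diagonal of $D$: it would violate the maximality condition that no point of the set lies in the interior of a triangle side. Hence $D$, and therefore also~$T$ (which uses exactly the diagonals of $D$ under the shared labelling), uses no same-string diagonal.

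For the backward direction, suppose $T \in \mathsf{TR}(\mathrm{C}(k \cdot r))$ uses no same-string diagonal. I would ``deflate'' $T$ to a dissection of \sc\ by replacing each diagonal of~$T$ with the straight segment joining the corresponding two labelled points of~\sc. Since the cyclic order of the $kr$ labelled points around the boundary is identical in $\mathrm{C}(k \cdot r)$ and in~\sc, two segments that do not cross in $\mathrm{C}(k \cdot r)$ remain non-crossing after deflation, so the non-crossing property transfers. Every labelled point is a vertex of $T$, and hence also of the deflated dissection.

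The main obstacle, I expect, is to verify that no triangle of $T$ collapses under deflation. A collapse could occur only if the three vertices of some triangle of~$T$ all lie on a single string, since only then do they become collinear in~\sc. But in that situation at least one pair of the three vertices is non-adjacent on the string, and the corresponding edge of the triangle would be a same-string diagonal of~$T$, contradicting the hypothesis. If exactly two vertices of a triangle lie on a common string, they must be adjacent on it (else again the edge between them is a forbidden same-string diagonal), so the third vertex is off the string and the deflated triangle is non-degenerate. This shows that the deflation of $T$ is a triangulation of~\sc, and applying~$\varphi$ to it clearly recovers $T$, completing the equivalence.
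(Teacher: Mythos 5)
Your argument is correct: the forward direction rightly invokes the maximality condition (a same-string chord between non-adjacent points would leave intermediate string points in the interior of a triangle side), and the backward direction correctly identifies the only possible obstruction to deflating $T$ back to \sc\ --- degenerate triangles with all three vertices on one string --- and rules it out because such a triangle would force a same-string diagonal. The paper itself offers no proof of this observation (it is introduced with ``It is easy to see the following''), so your write-up simply supplies the natural justification the authors omitted; there is nothing to compare beyond that.
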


We call the diagonals mentioned in Observation~\ref{obs:legal_tr}
\textit{forbidden},
and we need to exclude triangulations that contain them
from the set of all the triangulations of $\mathrm{C}(k \cdot r)$.
Notice, however, that, if a triangulation of $\mathrm{C}(k \cdot r)$
uses some forbidden diagonal, then it necessarily (also) uses
a forbidden diagonal that connects two points
at distance $2$ along the boundary of $\mathrm{C}(k \cdot r)$.
Therefore, the characterization of legal triangulations
from Observation~\ref{obs:legal_tr} can be simplified as follows.

\begin{observation}\label{obs:legal_tr_2}
Let $T$ be a triangulation of $\mathrm{C}(k \cdot r)$.
$T$ is legal if and only if it uses
no diagonal of the form $P_{i,j}P_{i, j+2}$
with $0\leq i \leq k-1$ and $0 \leq j \leq r-2$.
\end{observation}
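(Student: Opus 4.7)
My plan is to derive Observation~\ref{obs:legal_tr_2} from Observation~\ref{obs:legal_tr} by showing that the diagonals $P_{i,j}P_{i,j+2}$ are precisely the ``shortest'' forbidden diagonals, and that any longer forbidden diagonal forces the presence of a shorter one inside the same triangulation. The forward direction is immediate: by Observation~\ref{obs:legal_tr} a legal triangulation uses no forbidden diagonal, and every $P_{i,j}P_{i,j+2}$ has both endpoints in the string through $P_{i,0}$ and $P_{i+1,0}$, hence is forbidden.

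For the reverse direction I would argue by contradiction. Suppose $T$ is a triangulation of $\mathrm{C}(k\cdot r)$ that uses no diagonal of the form $P_{i,j}P_{i,j+2}$ but is not legal. Then by Observation~\ref{obs:legal_tr} it uses at least one forbidden diagonal, which (using the convention $P_{i,r}=P_{i+1,0}$) can be written as $P_{i,a}P_{i,b}$ with $0\le a<b\le r$. Among all such forbidden diagonals present in $T$, choose one minimising the gap $b-a$. Since $P_{i,a}P_{i,b}$ is a diagonal rather than a boundary edge, we have $b-a\ge 2$, and the standing assumption that no $P_{i,j}P_{i,j+2}$ appears forces $b-a\ge 3$.

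Now consider the triangle $\Delta$ of $T$ having $P_{i,a}P_{i,b}$ as an edge, on the side of the diagonal whose boundary arc consists of the consecutive points $P_{i,a},P_{i,a+1},\dots,P_{i,b}$. Every point of $\mathrm{C}(k\cdot r)$ on that side lies in this string, so the third vertex of $\Delta$ is $P_{i,c}$ for some $a<c<b$. From $(c-a)+(b-c)=b-a\ge 3$ at least one of $c-a\ge 2$ or $b-c\ge 2$ holds, and the corresponding side of $\Delta$ is then a forbidden diagonal with strictly smaller gap, contradicting the minimal choice of $b-a$. The only delicate point is keeping the convention $P_{i,r}=P_{i+1,0}$ consistent throughout, so that the extremal case $a=0$, $b=r$ (where the two endpoints are corners of the same string) is handled by the same argument; this causes no difficulty since the subdivision points $P_{i,1},\dots,P_{i,r-1}$ all lie strictly between $P_{i,0}$ and $P_{i+1,0}$ along the boundary arc cut off by the diagonal.
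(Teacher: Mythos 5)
Your proposal is correct and follows essentially the same route as the paper, which simply asserts (in the sentence preceding the observation) that any triangulation using a forbidden diagonal must also use one joining two points at distance~$2$ along the boundary. Your minimality argument on the gap $b-a$, using the triangle of $T$ on the string side of a shortest forbidden diagonal, is exactly the justification the paper leaves implicit.
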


We call the diagonals mentioned in Observation~\ref{obs:legal_tr}
\textit{essentially forbidden}.
Figure~\ref{fig:ex_forb_diag} shows (a) forbidden
and (b) essentially forbidden diagonals of $\mathrm{C}(4 \cdot 4)$.
\begin{figure}
\begin{center}
\includegraphics[scale=0.8]{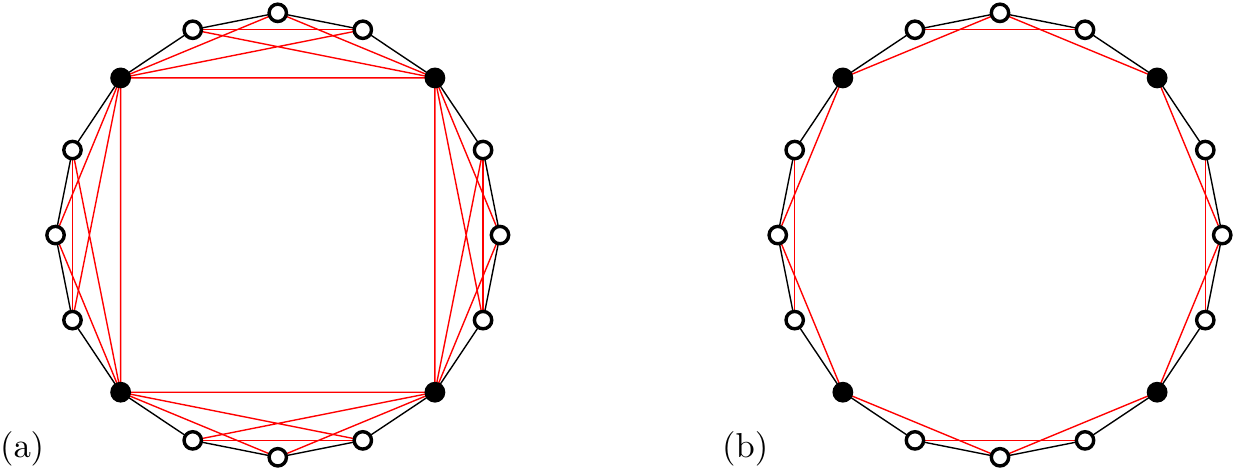}
\end{center}
\caption{Forbidden (a) and essentially forbidden (b) diagonals of
  $\mathrm{C}(4 \cdot 4)$.}
\label{fig:ex_forb_diag}
\end{figure}

Thus, we need to exclude triangulations of $\mathrm{C}(k \cdot r)$
that use essentially forbidden diagonals.
The total number of essentially forbidden diagonals is $k(r-1)$,
but the neighbouring essentially forbidden diagonals
(that is, $P_{i,j}P_{i, j+2}$ and $P_{i,j+1}P_{i, j+3}$
for some $i$ and $j$ with $0\leq i \leq k-1$ and $0 \leq j \leq r-3$)
cannot coexist in the same triangulation of $\mathrm{C}(k \cdot r)$.
Thus, the number of possible choices of $\ell$
essentially forbidden diagonals from the same string, where
$0 \leq \ell \leq
\lfloor r/2 \rfloor$,
equals the number of $\ell$-subsets of $\{1, 2, \dots, r-1\}$
that do not contain adjacent numbers.
This is a simple exercise in elementary combinatorics, and the
answer is $\binom{r-\ell}{\ell}$.
Therefore, the number of ways to choose $m$ pairwise non-crossing
essentially forbidden diagonals in $\mathrm{C}(k \cdot r)$ is
\[
{a}_{k,r,m}:=[x^m] \left(\sum_{\ell = 0}^{\lfloor r/2 \rfloor}
\binom{r-\ell}{\ell} x^{\ell} \right)^k,
\]
where $[x^m]f(x)$ denotes the coefficient of $x^m$ in the
polynomial of formal power series $f(x)$.

Once $m$ essentially forbidden diagonals of $\mathrm{C}(k \cdot r)$
are chosen,
we are left with a convex $(kr-m)$-gon to be triangulated.
Therefore, the number of illegal triangulations that use at least
$m$ essentially forbidden diagonals
is ${a}_{k,r,m} C_{kr-m-2}$.
At this point we can apply the inclusion-exclusion principle and obtain
\begin{equation}\label{eq:b1}
\trkr{k}{r} = \sum_{m=0}^{\lfloor r/2 \rfloor k} (-1)^m \, {a}_{k,r,m} \,
C_{kr-m-2}.
\end{equation}

Next, we observe that
\[
\sum_{\ell = 0}^{\lfloor r/2 \rfloor} \binom{r-\ell}{\ell} (-x)^{\ell}
= x^{r/2} \, U_r\left( \frac{1}{2 \sqrt{x}}\right),
\]
where $U_r(x)$ is the $r$th Chebyshev polynomial of the second kind.
Thus,
\[
(-1)^m \, {a}_{k,r,m} =
[x^m] \left( x^{r/2} \, U_r\left( \frac{1}{2 \sqrt{x}}\right) \right)^k,
\]
and \eqref{eq:b1} can be rewritten as
\begin{equation}\label{eq:b2}
\trkr{k}{r} =\coef{x^{rk-2}}
\left(\left(x^{r/2}U_r\left(\frac {1}
{2\sqrt x}\right)\right)^k \,
C(x)\right),
\end{equation}
where
\[C(x)=\frac {1-\sqrt{1-4x}} {2x}\]
is the generating function for Catalan numbers.
Since an explicit form of $U_r(x)$ is
\[
U_r(x) =
\frac{\left(x+\sqrt{x^2-1}\right)^{r+1}
-\left(x-\sqrt{x^2-1}\right)^{r+1}}{2\sqrt{x^2-1}},
\]
it follows that
\begin{multline*}
\trkr{k}{r}=\coef{x^{rk-2}}
\left(
\frac {1} {2^{(r+1)k}({1}- {4x})^{k/2}}\right.\\
\left.
\cdot
\left(
\left({1} +\sqrt{{1}- {4x}}\right)^{r+1}
-\left({1} -\sqrt{{1}- {4x}}\right)^{r+1}
\right)^k \,
\frac {1-\sqrt{1-4x}} {2x}
\right).
\end{multline*}

Using Cauchy's integral formula, we may write this expression
in terms of a complex contour
integral, namely as
\begin{multline}\label{eq:tr1}
\trkr{k}{r}=
\frac {1} {2\pi i}\int_{\mathcal C}
\frac {dx} {2^{(r+1)k+1}x^{rk}({1}- {4x})^{k/2}}\\
\cdot
\left(
\left({1} +\sqrt{{1}- {4x}}\right)^{r+1}
-\left({1} -\sqrt{{1}- {4x}}\right)^{r+1}
\right)^k
\left({1-\sqrt{1-4x}}\right),
\end{multline}
where $\mathcal C$ is a small contour encircling the origin
once in positive direction. Next we perform the substitution
$x= t(1-t)$, in which case $dx=(1-2t)\,dt$.
This leads us to the following integral representation of our
numbers $\trkr{k}{r}$.

\begin{proposition} \label{prop:Formel}
For all positive integers $k$ and $r$ with $rk\ge3$, we have
\begin{equation} \label{eq:intFormel}
\trkr{k}{r}=
-\frac {1} {4\pi i}\int_{\mathcal C}
\frac {dt} {t^{rk}(1-t)^{rk}({1}- {2t})^{k-2}}
\left(
\left({1} -t\right)^{r+1}
-t^{r+1}
\right)^k,
\end{equation}
where $\mathcal C$ is a contour close to $0$ which encircles
$0$ once in positive direction.
\end{proposition}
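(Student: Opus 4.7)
The plan is to deduce \eqref{eq:intFormel} from the contour-integral representation \eqref{eq:tr1} by executing the substitution $x = t(1-t)$. The key algebraic fact is that $1 - 4x = (1-2t)^2$, so choosing the branch of $\sqrt{1-4x}$ that equals $+1$ at $x = 0$ forces $\sqrt{1-4x} = 1-2t$ (and hence $1+\sqrt{1-4x} = 2(1-t)$, $1-\sqrt{1-4x} = 2t$, $(1-4x)^{k/2} = (1-2t)^k$, $x^{rk} = t^{rk}(1-t)^{rk}$) near $t=0$. Since $dx/dt = 1-2t$ equals $1$ at the origin, a small positively oriented loop around $t=0$ maps biholomorphically to a small positively oriented loop around $x=0$, so the contour $\mathcal C$ in \eqref{eq:tr1} pulls back to a contour of the required type.

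Substituting and simplifying --- the prefactor $2^{-(r+1)k-1}$ absorbs the $2^{(r+1)k+1}$ arising from the combination $\bigl((1+\sqrt{1-4x})^{r+1}-(1-\sqrt{1-4x})^{r+1}\bigr)^k(1-\sqrt{1-4x}) = 2^{(r+1)k+1}\,t\,\bigl((1-t)^{r+1}-t^{r+1}\bigr)^k$, and one power of $(1-2t)$ from $dx$ cancels one from $(1-4x)^{k/2}=(1-2t)^k$ --- gives the intermediate formula
\begin{equation}\label{eq:intermediate}
\trkr{k}{r} = \frac{1}{2\pi i}\int_{\mathcal C}\frac{t\,\bigl((1-t)^{r+1}-t^{r+1}\bigr)^k}{t^{rk}(1-t)^{rk}(1-2t)^{k-1}}\,dt.
\end{equation}
To reach the stated form \eqref{eq:intFormel}, I then apply the elementary identity $2t = 1 - (1-2t)$, which rewrites $t/(1-2t)^{k-1}$ as $\tfrac{1}{2}(1-2t)^{-(k-1)} - \tfrac{1}{2}(1-2t)^{-(k-2)}$. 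Substituting this into \eqref{eq:intermediate} splits the right-hand side into two contour integrals, the second of which is exactly \eqref{eq:intFormel}; the entire reduction therefore boils down to proving that
\[
J := \int_{\mathcal C}\frac{\bigl((1-t)^{r+1}-t^{r+1}\bigr)^k}{t^{rk}(1-t)^{rk}(1-2t)^{k-1}}\,dt = 0.
\]

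The vanishing of $J$ is the only non-routine step, and will be the main obstacle. Writing $G(t)$ for its integrand, one verifies directly that $G(1-t) = -G(t)$: the numerator flips by $(-1)^k$, the factor $t^{rk}(1-t)^{rk}$ is invariant, and $(1-2(1-t))^{k-1} = (-1)^{k-1}(1-2t)^{k-1}$. Moreover $G$ is a rational function; its apparent pole at $t = 1/2$ is removable, since the factorization $(1-t)^{r+1}-t^{r+1} = (1-2t)\sum_{j=0}^{r}(1-t)^{r-j}t^{j}$ shows that its numerator has a zero of order $k$ there, overwhelming the denominator's order $k-1$. Hence the only finite poles of $G$ lie at $t=0$ and $t=1$, and a routine degree count (using $rk \ge 3$) gives $G(t) = O(t^{-2})$ as $t \to \infty$, whence $\Res_\infty G = 0$. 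The global residue theorem on $\mathbb{CP}^1$ therefore yields $\Res_0 G + \Res_1 G = 0$, while the antisymmetry $G(1-t) = -G(t)$ translates, upon expanding $G$ around $t=1$ via $s = 1-t$ and reading off the coefficient of $(t-1)^{-1}$, into $\Res_1 G = \Res_0 G$. Together these force $\Res_0 G = 0$, so $J = 0$, and \eqref{eq:intFormel} follows.
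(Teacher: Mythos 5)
Your proof is correct, and it rests on exactly the same ingredients as the paper's: the substitution $x=t(1-t)$, the $O(t^{-2})$ decay of the integrand at infinity (so the residue at infinity vanishes), and the symmetry of the integrand under $t\mapsto 1-t$. The paper packages the symmetrization as the arithmetic mean of the residue-at-$0$ and residue-at-$1$ representations \eqref{eq:tr2} and \eqref{eq:tr3}, whereas you isolate the equivalent statement $J=0$ via the identity $2t=1-(1-2t)$ and prove it with the global residue theorem; the two arguments are interchangeable.
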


\begin{proof}
Carrying out the above described substitution in \eqref{eq:tr1},
we arrive at
\begin{equation} \label{eq:tr2}
\trkr{k}{r}=
\frac {1} {2\pi i}\int_{\mathcal C'}
\frac {(1-2t)\,dt} {t^{rk-1}(1-t)^{rk}({1}- {2t})^{k}}
\left(
\left({1} -t\right)^{r+1}
-t^{r+1}
\right)^k,
\end{equation}
where $\mathcal C'$ is a(nother) contour close to the origin encircling
the origin once in positive direction.
In order to obtain the more symmetric form (with respect to the
substitution $t\to1-t$) in \eqref{eq:intFormel}, we
blow up the contour $\mathcal C'$  so that
it is sent to infinity. While doing this, we must pass over
the pole $t=1$ of the integrand. (The point $t=1/2$ is a removable
singularity of the integrand.)
This must be
compensated by taking the residue at $t=1$ into account.
The integrand is of the order $O(t^{-rk+2})$ as $\vert
t\vert\to\infty$, and even of the order $O(t^{-rk+1})$ if $r$ is odd.
Together, this means that the integrand is of the order $O(t^{-2})$
as $\vert t\vert\to\infty$ for $rk\ge3$. Hence,
the integral along the contour near infinity vanishes. Thus, we obtain
\begin{align} \notag
\trkr{k}{r}&=-\Res_{t=1}
\frac {1} {t^{rk-1}(1-t)^{rk}({1}- {2t})^{k-1}}
\left(
\left({1} -t\right)^{r+1}
-t^{r+1}
\right)^k\\
&=
-\frac {1} {2\pi i}\int_{\mathcal C}
\frac {dt} {(1+t)^{rk-1}(-t)^{rk}(-{1}- {2t})^{k-1}}
\left(
\left(-t\right)^{r+1}
-(1+t)^{r+1}
\right)^k,
\label{eq:tr3}
\end{align}
where $\mathcal C$ is a contour close to $0$, which encircles
$0$ once in positive direction.
We have thus obtained two (slightly) different expressions for
$\trkr{k}{r}$, namely \eqref{eq:tr2} and \eqref{eq:tr3}.
Thus, $\trkr{k}{r}$ is also equal to their arithmetic mean.
If this is worked out, after having substituted $-t$ for $t$ in
\eqref{eq:tr3}, one arrives at \eqref{eq:intFormel}.
\end{proof}

We are now in the position to derive explicit formulas for
$\trkr{k}{r}$ in terms of binomial double sums.

\begin{theorem} \label{thm:Formel}
For all positive integers $k$ and $r$ with $rk\ge3$, we have
\begin{align} \label{eq:Formel1}
\trkr{k}{r}&=
\sum_{j=0}^k\sum_{\ell=0}^{r k-(r+1)j-2} (-1)^j\, 2^\ell\,
\binom kj \binom {k-2+\ell}{\ell}
\binom {(r-1)k-\ell-3}{r k-(r+1)j-\ell-2}\\
\label{eq:Formel2}
&=
\sum_{j=0}^k
\sum_{\ell=0} ^{rk-(r+1)j-1} (-1)^{j+1} \,2^{\ell-1}\,
\binom kj \binom {k-3+\ell}{\ell}
 \binom {(r-1)k-\ell-2} {rk-(r+1)j-\ell-1}.
\end{align}
\end{theorem}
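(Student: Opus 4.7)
The plan is to interpret the contour integral in Proposition~\ref{prop:Formel} as a coefficient extraction and to expand the integrand into a double series by combining the binomial theorem with a generalised binomial series. Since $\mathcal C$ encircles only the origin, \eqref{eq:intFormel} can be read off as
\begin{equation*}
\trkr{k}{r}=-\tfrac12\,\coef{t^{rk-1}}\frac{\bigl((1-t)^{r+1}-t^{r+1}\bigr)^k}{(1-t)^{rk}(1-2t)^{k-2}}.
\end{equation*}

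First I would expand the numerator by the binomial theorem, so that after dividing by $(1-t)^{rk}$ one obtains
\begin{equation*}
\sum_{j=0}^{k}(-1)^j\binom{k}{j}\,t^{(r+1)j}(1-t)^{k-(r+1)j},
\end{equation*}
and expand the remaining factor as the generalised binomial series
$(1-2t)^{-(k-2)}=\sum_{\ell\ge 0}\binom{k-3+\ell}{\ell}2^\ell t^\ell$. Extracting $\coef{t^{rk-1}}$ then reduces to computing $\coef{t^m}(1-t)^a$ with $a=k-(r+1)j$ and $m=rk-1-(r+1)j-\ell$. Applying the identity $(-1)^m\binom{a}{m}=\binom{m-a-1}{m}$, which is valid for every integer $a$ under the falling-factorial convention, turns this coefficient into $\binom{(r-1)k-\ell-2}{rk-(r+1)j-\ell-1}$ while simultaneously absorbing the sign $(-1)^m$. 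Collecting the remaining signs together with the prefactor $-\tfrac12$ gives \eqref{eq:Formel2}, with the upper summation bound $\ell\le rk-(r+1)j-1$ coming from the condition $m\ge 0$.

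For \eqref{eq:Formel1} I would restart one step earlier, from the un-symmetrised integral \eqref{eq:tr2} that appears inside the proof of Proposition~\ref{prop:Formel}; cancelling one factor of $1-2t$ gives
\begin{equation*}
\trkr{k}{r}=\coef{t^{rk-2}}\frac{\bigl((1-t)^{r+1}-t^{r+1}\bigr)^k}{(1-t)^{rk}(1-2t)^{k-1}}.
\end{equation*}
The same three moves --- binomial expansion of the numerator, the series $(1-2t)^{-(k-1)}=\sum_\ell\binom{k-2+\ell}{\ell}(2t)^\ell$, and the negative-binomial identity now applied with $m=rk-2-(r+1)j-\ell$ --- produce $\binom{(r-1)k-\ell-3}{rk-(r+1)j-\ell-2}$ and yield \eqref{eq:Formel1}.

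I do not expect any deep obstacle: the calculation is essentially careful bookkeeping. The main technical subtlety is the treatment of $(1-t)^{k-(r+1)j}$ when the exponent is negative, in which case it must be read as a power series about $t=0$ rather than a polynomial. The identity $(-1)^m\binom{a}{m}=\binom{m-a-1}{m}$ handles both regimes uniformly and is precisely what converts the extracted Laurent coefficient into the ordinary nonnegative binomial coefficient that appears in the final formulas.
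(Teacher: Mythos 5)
Your proposal is correct and follows essentially the same route as the paper: the authors also read \eqref{eq:tr2} as $[t^{rk-2}]$ of $\bigl((1-t)^{r+1}-t^{r+1}\bigr)^k(1-t)^{-rk}(1-2t)^{-(k-1)}$ and \eqref{eq:intFormel} as the analogous coefficient extraction, then expand by the binomial theorem and the negative-binomial series. Your write-up merely makes explicit the bookkeeping (in particular the identity $(-1)^m\binom{a}{m}=\binom{m-a-1}{m}$ handling negative exponents of $1-t$) that the paper leaves to the reader.
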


\begin{proof}By Cauchy's integral formula,
Equation~\eqref{eq:tr2} can also be read as
\[
\trkr{k}{r}=[t^{rk-2}]
\frac {1} {(1-t)^{rk}({1}- {2t})^{k-1}}
\left(
\left({1} -t\right)^{r+1}
-t^{r+1}
\right)^k.
\]
If we now expand $\left(
\left({1} -t\right)^{r+1}
-t^{r+1}
\right)^k$ using the binomial theorem, and subsequently do the same
for powers of $1-t$ and of $1-2t$, then we are led to \eqref{eq:Formel1}.

If the same is done starting from \eqref{eq:intFormel}, then the
formula in \eqref{eq:Formel2} is obtained.
\end{proof}

\begin{remark}
If we choose $k=2$ in \eqref{eq:Formel2}, then
the only term which does not vanish is the one
with $j=1$ and $\ell=0$.
This term is $\binom {2r-4}{r-2}$, a central binomial coefficient.
If we interpret $\trkr{2}{r}$ (consistently with the case $k \geq 3$)
as the number of triangulations of $\mathrm{C}(2 \cdot r)$
that do not use (essentially) forbidden diagonals,
then it is easy to prove that this number
is indeed $\binom {2r-4}{r-2}$.
Indeed, one can construct a bijection
between such triangulations and
balanced sequences over $\{a, b\}$
using the same idea as in the proof of Theorem~\ref{prop:abc_all}(1) below.
 See Figure~\ref{fig:cbc} which illustrates this bijection for $r=4$.
\begin{figure}[h]
\begin{center}
\includegraphics[scale=0.8]{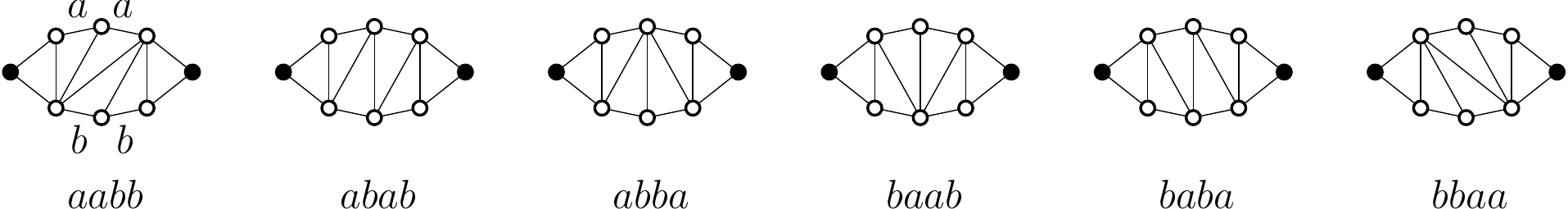}
\end{center}
\caption{Illustration of the fact $\trkr{2}{r}=\binom {2r-4}{r-2}$.}
\label{fig:cbc}
\end{figure}

\end{remark}

\section{Generating functions}\label{sec:GF}

Starting from the integral representation \eqref{eq:intFormel}, we
now show that ``horizontal" and ``vertical" generating functions
for the numbers $\trkr{k}{r}$ are algebraic.

\begin{theorem} \label{thm:vert}
For fixed $r\ge2$, we have
\begin{equation} \label{eq:vert}
\sum_{k\ge1}\trkr{k}{r}x^k=
-\frac {1} {2}\sum_{i=1}^r
\frac {t_i(x)^{r}(1-t_i(x))^{r}(1-2t_i(x))^2}
{(\frac {d} {dt}P_r)(x;t_i(x)) },
\end{equation}
where the $t_i(x)$, $i=1,2,\dots,r$, are the ``small" zeroes of the
polynomial\footnote{
$P_r(x;t)$ is indeed a polynomial in $t$
since $1-2t$ is a polynomial divisor of $(1-t)^{r+1}-t^{r+1}$.}
\[
P_r(x;t)=t^{r}(1-t)^{r}-x\left(\left({1} -t\right)^{r+1}
-t^{r+1}
\right)(1-2t)^{-1},
\]
that is, those zeroes $t(x)$ for which $\lim_{x\to0}t(x)=0$.
\end{theorem}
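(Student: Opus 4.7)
The plan is to substitute the contour integral representation \eqref{eq:intFormel} into $\sum_{k\ge 1}\trkr{k}{r}x^k$, swap sum and integral, sum the resulting geometric series, and then apply the residue theorem with the contour collapsed onto the ``small'' roots of $P_r(x;t)$.

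Concretely, after multiplying \eqref{eq:intFormel} by $x^k$, I would rewrite the integrand as $(1-2t)^2\,Q(x,t)^k$, where
\[
Q(x,t):=\frac{x\bigl((1-t)^{r+1}-t^{r+1}\bigr)}{t^{r}(1-t)^{r}(1-2t)}.
\]
For $x$ in a sufficiently small punctured neighbourhood of~$0$, the contour $\mathcal C$ in \eqref{eq:intFormel} can be chosen so that $|Q(x,t)|<1$ uniformly on~$\mathcal C$, so that after swapping sum and integral one may sum the geometric series and obtain
\[
\sum_{k\ge 1}\trkr{k}{r}x^k=-\frac{1}{4\pi i}\int_{\mathcal C}(1-2t)^2\,\frac{Q(x,t)}{1-Q(x,t)}\,dt.
\]
The crucial algebraic observation, read off directly from the definition of $P_r$, is
\[
1-Q(x,t)=\frac{P_r(x;t)}{t^{r}(1-t)^{r}}.
\]
Writing $Q/(1-Q)=1/(1-Q)-1$ and dropping the ``$-1$'' contribution (the polynomial $(1-2t)^2$ is entire in $t$, so its contour integral around~$0$ vanishes) then yields
\[
\sum_{k\ge 1}\trkr{k}{r}x^k=-\frac{1}{4\pi i}\int_{\mathcal C}\frac{(1-2t)^2\,t^{r}(1-t)^{r}}{P_r(x;t)}\,dt.
\]

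To conclude, I would evaluate this integral by residues. As a polynomial in~$t$, $P_r(x;t)$ has degree $2r$, and at $x=0$ it equals $t^r(1-t)^r$; hence, by continuity, for $x$ small exactly $r$ of its roots — the ``small'' roots $t_1(x),\dots,t_r(x)$ — lie inside a small~$\mathcal C$ around~$0$. Assuming they are simple (which holds generically on a punctured neighbourhood of~$0$), the residue at $t=t_i(x)$ is precisely the $i$-th summand on the right-hand side of \eqref{eq:vert}, and the prefactor $-\tfrac12$ arises from the $-1/(4\pi i)$ multiplied by the $2\pi i$ of the residue theorem. The main point requiring care will be that at $x=0$ the small roots coalesce at $t=0$, so the individual branches $t_i(x)$ have branch singularities there; however, the right-hand side of \eqref{eq:vert} is a symmetric function of the tuple $(t_1(x),\dots,t_r(x))$ and therefore defines a single-valued algebraic function of~$x$, so the identity, first established on a punctured disk, extends by analytic continuation to the full neighbourhood.
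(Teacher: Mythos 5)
Your proposal is correct and follows essentially the same route as the paper: multiply \eqref{eq:intFormel} by $x^k$, sum the geometric series under the integral (the $k=0$ term integrates to zero, which is why your $Q/(1-Q)$ and the paper's $1/(1-Q)$ agree), recognize $1-Q=P_r(x;t)/\bigl(t^r(1-t)^r\bigr)$, and evaluate by residues at the $r$ small zeroes. The only (minor) divergence is that you locate the small zeroes by a continuity/Rouch\'e argument from $P_r(0;t)=t^r(1-t)^r$, whereas the paper identifies them explicitly as Puiseux series $t_i(x)=t(\omega_r^i x^{1/r})$ and uses the symmetry $P_r(x;1-t)=P_r(x;t)$ to see that the remaining zeroes are $1-t_i(x)$; both suffice, and your explicit remarks on simplicity of the roots and single-valuedness of the symmetric sum are points the paper leaves implicit.
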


\begin{proof}
It should be noted that the right-hand side of \eqref{eq:intFormel}
vanishes for $k=0$. Hence,
multiplication of
both sides of \eqref{eq:intFormel} by $x^k$ and subsequent summation of both
sides over $k=0,1,\dots$ by means of the summation formula for
geometric series yield
\begin{align}
\notag
\sum_{k\ge1}\trkr{k}{r}x^k
&=
-\frac {1} {4\pi i}\int_{\mathcal C}
\frac {(1-2t)^2\,dt} {1-x\left(\left({1} -t\right)^{r+1}
-t^{r+1}
\right)t^{-r}(1-t)^{-r}(1-2t)^{-1} }\\
&=
-\frac {1} {4\pi i}\int_{\mathcal C}
\frac {t^{r}(1-t)^{r}(1-2t)^2}
{t^{r}(1-t)^{r}-x\left(\left({1} -t\right)^{r+1}
-t^{r+1}
\right)(1-2t)^{-1} }\,dt,
\label{eq:Pxt}
\end{align}
provided
\[
\vert x\vert<\left\vert
\frac {t^{r}(1-t)^{r}(1-2t)}
{\left({1} -t\right)^{r+1}
-t^{r+1}}
\right\vert
\]
for all $t$ along the contour $\mathcal C$.
By the residue theorem, this integral equals the sum of the residues
at poles of the integrand inside $\mathcal C$.
The poles are the ``small" zeroes of
the denominator polynomial
$P_r(x;t)$. By general theory, the zeroes $t_i(x)$ of $P_r(x;t)$,
$i=1,2,\dots,2r$, can be written in
terms of Puiseux series in $x$.
In order to identify the ``small" zeroes,
we write the equation $P_r(x;t)=0$ in the form
\[
\frac {t^{r}(1-t)^{r}(1-2t)}
{\left({1} -t\right)^{r+1}
-t^{r+1}}=x.
\]
Taking the $r$th root, we obtain
\[
\frac {t(1-t)(1-2t)^{1/r}}
{\left(\left({1} -t\right)^{r+1}
-t^{r+1}\right)^{1/r}}=\omega_r^i x^{1/r},
\quad \quad i=1,2,\dots,r,
\]
where $\omega_r=e^{2i\pi/r}$ is a primitive $r$th root of unity.
It is easy to see that there exists a unique power series
solution $t(X)$ to the equation
\[
\frac {t(1-t)(1-2t)^{1/r}}
{\left(\left({1} -t\right)^{r+1}
-t^{r+1}\right)^{1/r}}=X.
\]
We thus obtain the ``small" zeroes of $P_r(x;t)$ as
$t_i(x)=t(\omega_r^i x^{1/r})$, $i=1,2,\dots,r$. Because of the relation
$P_r(x;1-t)=P_r(x;t)$, the other zeroes of $P_r(x;t)$ are $1-t_i(x)$,
$i=1,2,\dots,r$, which are not ``small". The $t_i(x)$ for
$i=1,2,\dots,r$ are hence all ``small" zeroes.

In view of the above considerations, from \eqref{eq:Pxt} we get
\begin{align*}
\notag
\sum_{k\ge1}\trkr{k}{r}x^k
&=
-\frac {1} {4\pi i}\int_{\mathcal C}
\frac {t^{r}(1-t)^{r}(1-2t)^2}
{P_r(x;t) }\,dt\\
&=
-\frac {1} {2}\sum_{i=1}^r
\Res_{t=t_i(x)}
\frac {t^{r}(1-t)^{r}(1-2t)^2}
{P_r(x;t) }\\
&=
-\frac {1} {2}\sum_{i=1}^r
\frac {t_i(x)^{r}(1-t_i(x))^{r}(1-2t_i(x))^2}
{(\frac {d} {dt}P_r)(x;t_i(x)) },
\end{align*}
as desired.
\end{proof}

We illustrate this theorem by considering the case where $r=2$.
In this case, the polynomial $P_r(x;t)$ becomes
\[
P_2(x;t)=t^{2}(1-t)^{2}-x(t^2-t+1).
\]
The zeroes of this polynomial are
\[
t_i(x)=
\frac{1}{2}
   \left(1\pm\sqrt{1+2 x\pm2 \sqrt{x+4}
   \sqrt{x}}\right),
\quad \quad i=1,2,3,4.
\]
The small zeroes are
\[
t_1(x)=\frac{1}{2}
   \left(1-\sqrt{1+2 x-2 \sqrt{x+4}
   \sqrt{x}}\right)
\quad \text{and}\quad
t_2(x)=\frac{1}{2}
   \left(1-\sqrt{1+2 x+2 \sqrt{x+4}
   \sqrt{x}}\right).
\]
If all this is used in \eqref{eq:vert}, then we obtain
\begin{multline*}
\sum_{k\ge1}\trkr{k}{2}x^k=
\frac {1} {8}\sqrt{\frac x{x + 4}}
\left(
    \sqrt{1 + 2 x + 2 \sqrt{x (x + 4)}}
    \left(\sqrt{x} + \sqrt{x+4}\right)^2\right.\\
\left.
-\sqrt{1 + 2 x - 2 \sqrt{x (x + 4)}} \left(\sqrt{x} - \sqrt{x+4}\right)^2
\right)
\end{multline*}
after some simplification.

\begin{theorem} \label{thm:hor}
For fixed $k\ge2$, we have
\begin{equation} \label{eq:hor}
\sum_{r\ge1}\trkr{k}{r}x^r=
\frac {1} {2}
\sum_{j=0}^k (-1)^j\binom kj
\sum_{i=1}^{k-j}
\frac {t_{i,j}^{j+1}(x)(1-t_{i,j}(x))^{k-j+1}}
{({1}- {2t_{i,j}(x)})^{k-2}\,(k-j-kt_{i,j}(x))},
\end{equation}
where the $t_{i,j}(x)$, $i=1,2,\dots,k-j$, are the ``small" zeroes of the
polynomial
\[
Q_{j,k}(x;t)=t^{k-j}(1-t)^{j}-x,
\]
$j=1,2,\dots,k$,
that is, those zeroes $t(x)$ for which $\lim_{x\to0}t(x)=0$.
\end{theorem}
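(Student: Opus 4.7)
The plan is to follow the template of the proof of Theorem~\ref{thm:vert}: starting from the integral representation \eqref{eq:intFormel}, multiply by $x^r$, sum over $r\ge1$, interchange sum and integral, and evaluate by the residue theorem. The essential new difficulty is that $r$ now appears inside the exponent of $(1-t)^{r+1}-t^{r+1}$ in a non-monomial fashion, so the $r$-sum cannot be reduced to a single geometric series directly as it was in Theorem~\ref{thm:vert}. The remedy is to first apply the binomial theorem to split the $k$th power into monomial pieces, each of which does produce a geometric progression in~$r$.

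Concretely, I would expand
\[
\bigl((1-t)^{r+1}-t^{r+1}\bigr)^k=\sum_{j=0}^k(-1)^j\binom{k}{j}(1-t)^{(r+1)(k-j)}t^{(r+1)j},
\]
and combine with the $r$-dependent denominator $t^{rk}(1-t)^{rk}$ from \eqref{eq:intFormel}. For each~$j$ the $r$-dependent factor reduces to $\bigl(t^{k-j}(1-t)^j\bigr)^{-r}$, and the geometric series $\sum_{r\ge1}\bigl(x/[t^{k-j}(1-t)^j]\bigr)^r=x/Q_{j,k}(x;t)$ (valid for $t$ on a small contour once $|x|$ is small enough) produces
\[
\sum_{r\ge1}\trkr{k}{r}x^r=-\frac{1}{4\pi i}\int_{\mathcal{C}}\sum_{j=0}^k(-1)^j\binom{k}{j}\frac{x\,t^j(1-t)^{k-j}}{(1-2t)^{k-2}\,Q_{j,k}(x;t)}\,dt.
\]

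Next I apply the residue theorem termwise. The factor $(1-2t)^{k-2}$ contributes no pole near the origin, so the only poles inside $\mathcal{C}$ come from the zeroes of $Q_{j,k}(x;t)=t^{k-j}(1-t)^j-x$. A Puiseux-series analysis (parallel to the one carried out for $P_r(x;t)$) shows that for $0\le j\le k-1$ exactly $k-j$ zeroes are of order $x^{1/(k-j)}$ and thus small, while the remaining $j$ zeroes cluster near $t=1$; for $j=k$ all zeroes are near $t=1$ and the inner sum is empty. Using $Q'_{j,k}(x;t)=t^{k-j-1}(1-t)^{j-1}(k-j-kt)$ and the defining relation $x=t_{i,j}(x)^{k-j}(1-t_{i,j}(x))^j$ at each small zero, the residue simplifies algebraically:
\[
\operatorname{Res}_{t=t_{i,j}(x)}\frac{x\,t^j(1-t)^{k-j}}{(1-2t)^{k-2}\,Q_{j,k}(x;t)}=\frac{t_{i,j}(x)^{j+1}(1-t_{i,j}(x))^{k-j+1}}{(1-2t_{i,j}(x))^{k-2}\,(k-j-kt_{i,j}(x))},
\]
and summing over $j$ and~$i$ (with the $-1/(4\pi i)\cdot 2\pi i$ prefactor) yields \eqref{eq:hor}.

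The main obstacles are the bookkeeping ones: first, justifying the interchange of sum and integral, which requires choosing $\mathcal{C}$ with radius $\epsilon$ small enough that $|x|<|t^{k-j}(1-t)^j|$ holds uniformly in $j$ on $\mathcal{C}$ (the constraint is tightest for $j=0$, forcing $|x|<\epsilon^k$); and second, the careful Puiseux identification of which of the $k$ zeroes of $Q_{j,k}$ are the ``small'' ones for each~$j$, to ensure the correct count $k-j$ of residues contributing. Once these points are settled, the remaining simplification using $x=t_{i,j}^{k-j}(1-t_{i,j})^j$ is a direct algebraic manipulation.
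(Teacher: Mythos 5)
Your proposal is correct and takes essentially the same route as the paper: expand $\bigl((1-t)^{r+1}-t^{r+1}\bigr)^k$ by the binomial theorem inside \eqref{eq:intFormel}, sum the resulting geometric series in $r$ to produce the denominators $Q_{j,k}(x;t)$ (the paper's \eqref{eq:Qxt}), and then collect residues at the $k-j$ small zeroes exactly as in Theorem~\ref{thm:vert} --- the paper leaves this last step to the reader, whereas you carry out the residue simplification via $x=t_{i,j}^{k-j}(1-t_{i,j})^j$ explicitly and correctly. One small point: your prefactor $-\tfrac{1}{4\pi i}\cdot 2\pi i=-\tfrac12$ is the right one (a check at $k=2$ against $\trkr{2}{r}=\binom{2r-4}{r-2}$ confirms it), so your computation actually yields \eqref{eq:hor} with $-\tfrac12$ in place of the printed $+\tfrac12$; the discrepancy is a sign slip in the stated theorem, not in your argument.
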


\begin{proof}
We multiply both sides of \eqref{eq:intFormel} by $x^r$ and then sum both
sides over $r=0,1,\dots$. Subsequently, we use the binomial theorem to
expand $\left((1-t)^{r+1}-t^{r+1}\right)^k$ and evaluate the resulting
sums over~$r$ by means of the summation formula for
geometric series.
Taking into account that the right-hand side of \eqref{eq:intFormel}
vanishes also for $r=0$, this leads us to
\begin{align}
\sum_{r=1}^\infty \trkr{k}{r}x^r&=
-\frac {1} {4\pi i}\int_{\mathcal C}
\frac {dt} {({1}- {2t})^{k-2}}
\sum_{j=0}^k (-1)^j\binom kj
t^{j}(1-t)^{k-j}
\frac {1} {1-xt^{-(k-j)}(1-t)^{-j}}
\notag\\
&=
-\frac {1} {4\pi i}\int_{\mathcal C}
\frac {t^{k}(1-t)^{k}\,dt} {({1}- {2t})^{k-2}}
\sum_{j=0}^k (-1)^j\binom kj
\frac {1} {t^{k-j}(1-t)^j-x}.
\label{eq:Qxt}
\end{align}
The remaining arguments are completely analogous to those of the proof
of Theorem~\ref{thm:vert} and are therefore left to the reader.
\end{proof}

\section{The case $k=3$}
\label{sec:k=3}

The case of triangulations of a subdivided triangle, that is, the case
where $k=3$, is particularly interesting from the point of view of
exact enumeration formulas. By \eqref{eq:Formel2}, we know that
\begin{equation} \label{eq:k=3A}
\trkr{3}{r}=
-\sum_{\ell=0} ^{3r-1}  \,2^{\ell-1}\,
 \binom {3r-\ell-5} {3r-\ell-1}
+3\sum_{\ell=0} ^{2r-2}\,2^{\ell-1}\,
 \binom {3r-\ell-5} {2r-\ell-2}
-3\sum_{\ell=0} ^{r-3} \,2^{\ell-1}\,
 \binom {3r-\ell-5} {r-\ell-3}.
\end{equation}
A simpler formula can be obtained if one reads coefficients from the
right-hand side of \eqref{eq:intFormel} in a way that differs from
the one done in the proof of Theorem~\ref{thm:Formel}.
Namely, we write
\begin{align*}
&\trkr{k}{r}
=
-\frac {1} {4\pi i}\int_{\mathcal C}
\frac {dt} {({1}- {2t})}
\left(
t^{-3r}(1-t)^{3}-3t^{-2r+1}(1-t)^{-r+2}
+3t^{-r+2}(1-t)^{-2r+1}
\right)\\
&\quad =
-\frac {1} {4\pi i}\int_{\mathcal C}
\frac {dt} {({1}- {2t})}
t^{-3r}(1-t)^{3}
+\frac {3} {4\pi i}\int_{\mathcal C}
\frac {dt} {({1}- {2t})}
\left(
t^{-2r+1}(1-t)^{-r+2}
-t^{-r+2}(1-t)^{-2r+1}
\right)\\
&\quad =
-\frac {1} {4\pi i}\int_{\mathcal C}
\frac {dt} {({1}- {2t})}
t^{-3r}(1-t)^{3}
+\frac {3} {4\pi i}\int_{\mathcal C}
\sum_{j=0}^r t^{-2r+1+j}(1-t)^{-r+1-j}\,dt.
\end{align*}
The second integral can again be interpreted as a coefficient extraction
formula. In the first integral, we blow up $\mathcal C$ so that it
tends to the circle at infinity. While doing this, we pass over the
pole at $t=1/2$. Hence, the residue at this point must be taken into
account. The integral along the circle at infinity vanishes since
the integrand is of the order $O(t^{-2})$ as $\vert
t\vert\to\infty$.
If this is taken into account, then we obtain the alternative formula
\begin{equation}
\trkr{3}r=
-2^{3r-5}
+\frac {3} {2}
\sum_{j=0}^r \binom {3r-4} {2r-2-j}
=
-2^{3r-5}
+\frac {3} {2}
\sum_{j=0}^r \binom {3r-4} {r-2+j}.
\label{eq:k=3B}
\end{equation}
Making use of the symmetry of binomial
coefficients and of the binomial theorem, it is a simple matter
to verify that the above is equivalent to
\begin{equation}
\trkr{3}r=
{2^{3r-4}}
-3
\sum_{j=0}^{r-3} \binom {3r-4} {j}.
\label{eq:k=3B2}
\end{equation}

We entered the sequence $(\trkr{3}{r})_{r\ge1}$ into
the On-line Encyclopedia of Integer Sequences \cite{oeis}.
This produced the hit OEIS/A087809, which in particular said
that (according to \cite{oeis} a conjecture of Benoit Cloitre)
another (elegant) formula must be
\begin{equation} \label{eq:k=3C}
\trkr{3}{r} = \sum_{i,j,k \geq 0}
\binom{r-1}{i+j}\binom{r-1}{j+k}\binom{r-1}{i+k}.
\end{equation}
We prove this conjecture, in a more general context, in the next
section; see Theorem~\ref{prop:abc_bl}.

There is yet another (substantially) different formula for $\trkr{3}r$.
By computer experiments, utilizing the guessing features of {\sl
  Maple}, we were led to conjecture that
\begin{equation} \label{eq:k=3D}
\trkr{3}{ r+2} =
3\binom{3r+2}{r}+\sum_{j=0}^{r}\frac{5j+1}{2j+1}\binom{3j}{j}8^{r-j}  .
\end{equation}
This formula can be established in the following way.
The (already established) formula \eqref{eq:k=3B2} for $\trkr{3}r$
satisfies the recurrence
\begin{equation} \label{eq:rek}
\trkr{3}{r+1}-8\trkr{3}r
=
\frac {3\,(5 r^2 - 19 r +6)\, (3 r-4)!} {(r-2)! \,(2 r)!}.
\end{equation}
This is easy to see by applying the relation
\[
\binom {3r-1}j=
\binom {3r-4}{j}+3\binom {3r-4}{j-1}+3\binom {3r-4}{j-2}+\binom {3r-4}{j-3}
\]
to the binomial coefficient appearing in the definition of
$\trkr{3}{r+1}$ (or by entering the sum in \eqref{eq:k=3B2} into
the Gosper--Zeilberger algorithm; cf.~\cite{AeqB}).
On the other hand, it is routine to verify that the expression
in \eqref{eq:k=3D} (with $r$ replaced by $r-2$) satisfies the same
recurrence. Comparison of an initial value then completes the proof
of \eqref{eq:k=3D}.

Finally, our results also enable us to establish another conjecture
reported in Entry OEIS/A087809 of \cite{oeis}, namely an expression
for the generating function of the numbers $\trkr{3}{r}$ that is
more compact than the expression produced by Theorem~\ref{thm:hor}
for $k=3$. According to \cite{oeis}, this expression was found by
Mark van Hoeij (presumably) by using his computer algebra tools.
It reads
\begin{equation} \label{eq:tr3GF}
\sum_{r\ge1}\trkr{3}{r-1}x^r=
\frac {10 g^3(x)-17 g^2(x)+7 g(x)-1}
{(1-3 g(x)) (2 g(x)-1) (4 g^2(x)-6 g(x)+1)},
\end{equation}
where $g(x)(1-g(x))^2 = x$.
Indeed, to see this, we first observe that
\[
(2 g(x)-1) (4 g^2(x)-6 g(x)+1)
=8g(x)(1-g(x))^2-1=8x-1.
\]
If we use this in \eqref{eq:tr3GF}, then we see that van Hoeij's claim is
\begin{align}
\notag
\trkr{3}{r+1}&=\coef{x^r}
\frac {1-7 g(x)+17 g^2(x)-10 g^3(x)}
{(1-3 g(x)) (1-8x)}\\
&=\sum_{j=0}^\infty \coef{x^{r-j}} 8^j
\frac {1-7 g(x)+17 g^2(x)-10 g^3(x)}
{(1-3 g(x))}.
\label{eq:Hoeij}
\end{align}
The coefficient of $x^{r-j}$ on the right-hand side is conveniently
computed using the second form of Lagrange inversion (see
\cite[Eq.~(1.2)]{KratAB}). We obtain
\begin{align*}
\coef{x^n} &
\frac {1-7 g(x)+17 g^2(x)-10 g^3(x)}
{(1-3 g(x))}\\
&=\coef{x^{-1}}\frac {1-7 x+17 x^2-10 x^3}
{(1-3 x)}
\big(x(1-x)^2\big)^{-n-1}\frac {d} {dx}\big(x(1-x)^2\big)\\
&=\coef{x^{n}}(1-7 x+17 x^2-10 x^3)\,(1-x)^{-2n-1}
\end{align*}
This is now substituted on the right-hand side of \eqref{eq:Hoeij}.
It yields
\begin{multline*}
\sum_{j=0}^\infty 8^j
\binom {3(r-j)} {r-j}
-7\sum_{j=0}^\infty 8^j
\binom {3(r-j)-1} {r-j-1}
+17
\sum_{j=0}^\infty 8^j
\binom {3(r-j)-2} {r-j-2}
-10
\sum_{j=0}^\infty 8^j
\binom {3(r-j)-3} {r-j-3}\\
=\sum_{j=0}^r 8^{r-j}
\binom {3j} {j}
-7\sum_{j=0}^r 8^{r-j}
\binom {3j-1} {j-1}
+17
\sum_{j=0}^r 8^{r-j}
\binom {3j-2} {j-2}
-10
\sum_{j=0}^r 8^{r-j}
\binom {3j-3} {j-3}.
\end{multline*}
In the first sum, we shift the index by replacing $j$ by $j-1$.
Thus, we obtain
\begin{align*}
\binom {3r}{r}+
\sum_{j=0}^r 8^{r-j} &
\left(8\binom {3j-3} {j-1}
-7
\binom {3j-1} {j-1}
+17
\binom {3j-2} {j-2}
-10
\binom {3j-3} {j-3}\right)\\
&=
\binom {3r}{r}+
\sum_{j=1}^r 8^{r-j}
\frac {5j-4} {2j-1}
\binom {3j-3} {j-1}\\
&=
\binom {3r}{r}+
\sum_{j=0}^{r-1} 8^{r-1-j}
\frac {5j+1} {2j+1}
\binom {3j} {j}.
\end{align*}
By \eqref{eq:k=3D}, this expression equals $\trkr{3}{r+1}$,
which establishes van Hoeij's guess.

\section{The case $k=3$, non-balanced version}\label{sec:abc}

In this section, we generalize two formulas
for $\mathsf{tr}(3, r)$
that we obtained in Section~\ref{sec:k=3}
to the non-balanced case.
The proofs use quite elementary tools
and shed more light on the structure of subdivided triangles.
More precisely, we prove a generalization of~\eqref{eq:k=3C}
by considering a trivariate generating function and subsequently
performing coefficient extraction,
and a generalization of~\eqref{eq:k=3B2}
by partitioning a triangulation of a subdivided triangle
into structural blocks.

\smallskip

First we introduce some notation.
Let $\mathrm{\Delta}(a,b,c)$ be the triangle $ABC$ whose sides are
subdivided as follows:
the side $BC$ is subdivided by $a$ points,
the side $CA$ by $b$ points,
and the side $AB$ by $c$ points.

Let $T$ be a triangulation of $\mathrm{\Delta}(a,b,c)$.
An \emph{ear} is a triangle of $T$ that contains a corner
of $ABC$.
For example, the triangulation in Figure~\ref{fig:dt}(a) has ears
in all three corners (marked in grey colour),
while the triangulation in Figure~\ref{fig:dt}(b)
has ears in the corners $A$ and $B$ (again marked in grey colour),
but none in~$C$.
An \emph{ear diagonal} is the side of an ear that
lies in the interior of $ABC$.
A \emph{central triangle} is a triangle of $T$
whose vertices are interior points of different sides of $ABC$.
For example, the triangulation in Figure~\ref{fig:dt}(a) contains
a central triangle (namely the green triangle), while the triangulation
in Figure~\ref{fig:dt}(b) is one without central triangle.
A \emph{regular triangle} is a triangle of $T$
which is neither an ear nor a central triangle.
A \emph{corner-side diagonal} is a diagonal of $T$
one of whose endpoints
is a corner of $ABC$ and the other an interior
point of the opposite side. Examples of corner-side diagonals are the
red diagonals in the triangulation in Figure~\ref{fig:dt}(b).
On the other hand, the triangulation in Figure~\ref{fig:dt}(a) does
not contain any corner-side diagonal.

It is easy to observe the following facts.
\begin{observation}\label{obs:prop_abc}
Triangulations of $\mathrm{\Delta}(a,b,c)$ have the following properties:
\begin{enumerate}
  \item Each regular triangle shares exactly one edge
  with a side of $ABC$.
  \item
Any triangulation of $\mathrm{\Delta}(a,b,c)$
  has corner-side diagonals emanating from at most one corner.
  \item Any triangulation of $\mathrm{\Delta}(a,b,c)$
  has at most one central triangle.
\end{enumerate}
\end{observation}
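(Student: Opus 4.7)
The plan is to verify each of the three assertions by direct geometric arguments based on the definitions of ears, central and regular triangles, and corner-side diagonals, together with the maximality condition that no vertex of the point set lies in the relative interior of an edge of any triangle of the triangulation.

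For (1), let $T$ be a regular triangle. Then $T$ has no corner of $ABC$ among its vertices (otherwise it would be an ear), so all three vertices of $T$ are interior subdivision points on the sides of $ABC$. The three vertices cannot be collinear, hence they cannot all lie on one side of $ABC$; and they do not lie on three different sides, since $T$ is not central. Therefore exactly two of the vertices lie on a common side of $ABC$ and the third on a different side. By maximality, the two vertices on the same side must be consecutive subdivision points of that side (otherwise a subdivision point would lie in the relative interior of the edge joining them), so the edge joining them coincides with a subsegment of that side of $ABC$, and it is the unique edge of $T$ on the boundary of $ABC$.

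For (2), I would argue by contradiction. Suppose the triangulation contains corner-side diagonals $AP$ and $BQ$ with $P\in\operatorname{int}(BC)$ and $Q\in\operatorname{int}(CA)$. The line through $B$ and $Q$ intersects $ABC$ in the segment $BQ$ alone, and it separates $A$ from $C$. Moving along $BC$ from $B$ toward $C$, one immediately enters the open half-plane containing $C$; hence $P$ lies on the $C$-side of the line $BQ$ while $A$ lies on the opposite side. The segment $AP$ therefore crosses the line $BQ$, and since $AP\subseteq ABC$ the crossing point must lie on the segment $BQ$ itself. This contradicts the non-crossing property of a triangulation.

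For (3), I would use a region-counting argument. Let $T_1$ be a central triangle with vertices $p_1,p_2,p_3$, one on each side of $ABC$. The open set $\operatorname{int}(ABC)\setminus T_1$ decomposes into three connected components, one ``corner region'' adjacent to each corner of $ABC$; the closure of the corner region at $A$, for instance, meets the boundary of $ABC$ only along subsegments of $AB$ and $CA$. If $T_2$ were a second central triangle, its interior would be disjoint from $T_1$ and connected, hence contained in one of these three corner regions. Then all three vertices of $T_2$ would lie in the closure of that region, forcing them to lie on only two sides of $ABC$, contradicting the definition of a central triangle. The main obstacle is making the topological description of the corner regions in (3) precise enough to conclude that $T_2$ is confined to a single one of them; apart from this, all three parts reduce to elementary planar geometry.
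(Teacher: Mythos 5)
Your arguments for all three parts are correct. The paper offers no proof at all here --- it simply states ``It is easy to observe the following facts'' --- so there is no argument to compare against; your write-up supplies exactly the elementary planar-geometry details the authors left implicit (the maximality/consecutivity argument in (1), the separation argument showing two corner-side diagonals from distinct corners must cross in (2), and the three-corner-regions decomposition in (3)), and each step, including the topological claim you flag as the main obstacle, is sound.
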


More precisely: assume $(a,b,c)\neq(0,0,0)$,
and let $T$ be a triangulation of $\mathrm{\Delta}(a,b,c)$.
Then \emph{either}
$T$ has one central triangle,
three ears,
and no corner-side diagonal,
\emph{or}
$T$ has no central triangle,
two ears,
and at least one corner-side diagonal emanating from the remaining corner.
Triangulations of the former kind will be called
\emph{$\mathrm{T}$-triangulations} (see Figure~\ref{fig:dt}(a) for
an example),
and triangulations of the latter kind will be called
\emph{$\mathrm{D}$-triangulations} (see Figure~\ref{fig:dt}(b) for an example).
Moreover,
a \emph{$\mathrm{D}_A$-triangulation} is a
($\mathrm{D}$-)triangulation
that contains a corner-side diagonal one of whose endpoints is $A$,
and $\mathrm{D}_B$-
and $\mathrm{D}_C$-triangulations are similarly defined.
The triangulation in Figure~\ref{fig:dt}(b) is a $\mathrm{D}_C$-triangulation.

\begin{figure}
\begin{center}
\includegraphics[scale=0.75]{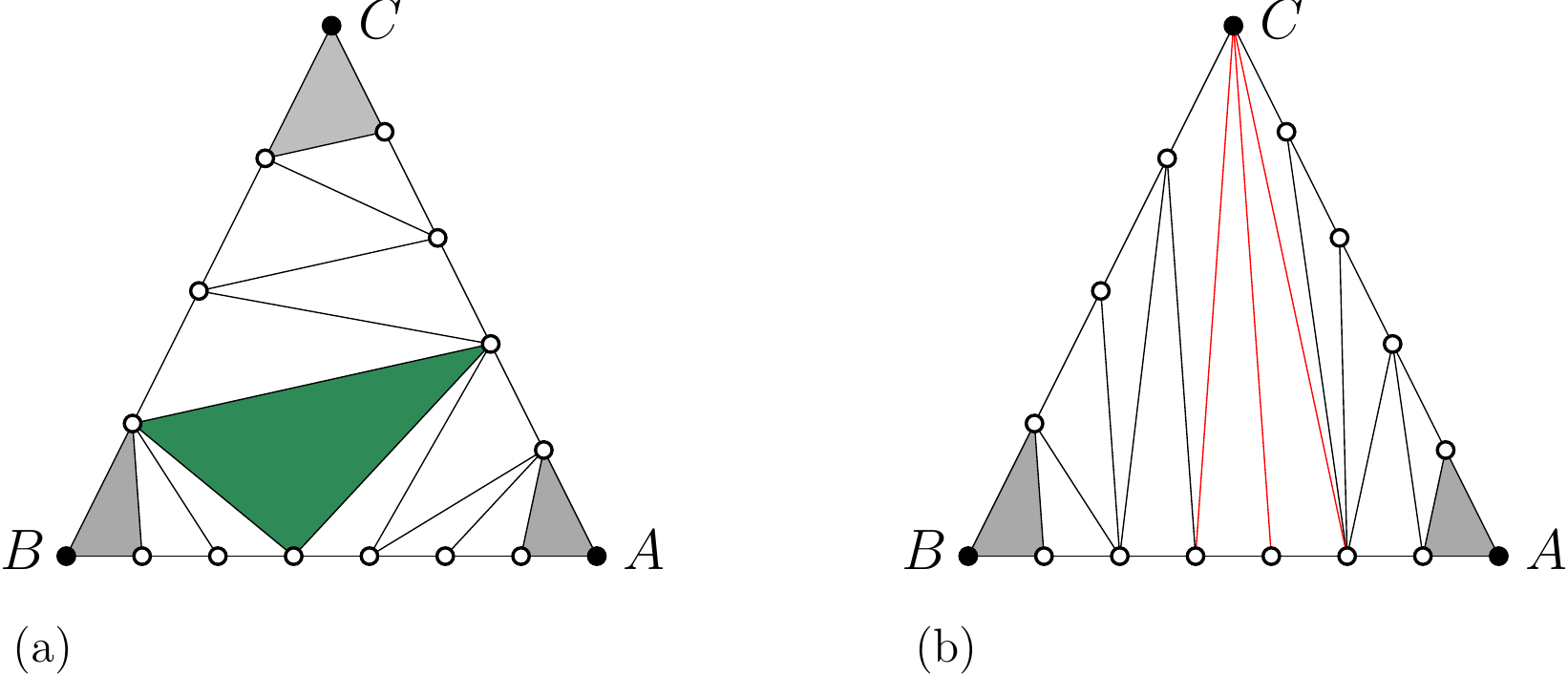}
\end{center}
\caption{Two triangulations of $\Delta(3,4,6)$:
(a) a $\mathrm{T}$-triangulation;
(b) a $\mathrm{D}_C$-triangulation.}
\label{fig:dt}
\end{figure}
Denote the sets of
$\mathrm{T}$-,
$\mathrm{D}$-,
$\mathrm{D}_A$-,
$\mathrm{D}_B$-,
and $\mathrm{D}_C$-triangulations
of \tabc\break
by
$\mathsf{TR}_{\mathrm{T}}(\Delta(a,b,c))$,
$\mathsf{TR}_{\mathrm{D}}(\Delta(a,b,c))$,
$\mathsf{TR}_{\mathrm{D}_A}(\Delta(a,b,c))$,
$\mathsf{TR}_{\mathrm{D}_B}(\Delta(a,b,c))$, and\break
$\mathsf{TR}_{\mathrm{D}_C}(\Delta(a,b,c))$,
respectively.
Similarly, denote their cardinalities by
$\mathsf{tr}$ with appropriate specification:
$\mathsf{tr}_{\mathrm{T}}(\Delta(a,b,c))$, etc.

The theorem below summarizes our counting formulas for the various
classes of triangulations that we just defined. In particular,
it provides the promised generalization of~\eqref{eq:k=3B2}
in \eqref{eq:abc_total}.

\begin{theorem}\label{prop:abc_all}
For any non-negative integers $a, b, c$
not all equal to zero,
\begin{enumerate}
  \item
  the number of\/ $\mathrm{D}$-triangulations of $\Delta(a,b,c)$ is
  \begin{equation}\label{eq:abc_d}
  \mathsf{tr}_{\mathrm{D}}(\Delta(a,b,c))=\binom{a+b+c-1}{a-1}
+\binom{a+b+c-1}{b-1}+\binom{a+b+c-1}{c-1};
  \end{equation}
  \item
  the number of\/ $\mathrm{T}$-triangulations of $\Delta(a,b,c)$ is
  \begin{equation}
 \mathsf{tr}_{\mathrm{T}}(\Delta(a,b,c))
 = 2^{a+b+c-1}
- \sum_{\ell=0}^{a-1}\binom{a+b+c-1}{\ell}-
\sum_{\ell=0}^{b-1}\binom{a+b+c-1}{\ell}-
\sum_{\ell=0}^{c-1}\binom{a+b+c-1}{\ell};
\label{eq:abc_t}
\end{equation}
  \item
  the total number of triangulations of $(\Delta(a,b,c)$ is
  \begin{equation}
 \mathsf{tr}(\Delta(a,b,c))
 = 2^{a+b+c-1}
- \sum_{\ell=0}^{a-2}\binom{a+b+c-1}{\ell}-
\sum_{\ell=0}^{b-2}\binom{a+b+c-1}{\ell}-
\sum_{\ell=0}^{c-2}\binom{a+b+c-1}{\ell}.
\label{eq:abc_total}
\end{equation}
\end{enumerate}
\end{theorem}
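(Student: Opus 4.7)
The plan is to prove parts~(1) and~(3) independently, and then to derive part~(2) via the identity $\mathsf{tr}_{\mathrm{T}}=\mathsf{tr}-\mathsf{tr}_{\mathrm{D}}$, which holds by the dichotomy between $\mathrm{T}$- and $\mathrm{D}$-triangulations stated in the paragraph preceding the theorem.

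For part~(1) I would focus on $\mathrm{D}_A$-triangulations, the other two counts following by cyclic symmetry. The key initial observation is that in a $\mathrm{D}_A$-triangulation the corner $B$ can have no diagonal at all: any candidate diagonal from $B$ is either a corner-side diagonal (forbidden, since we are in $\mathrm{D}_A$) or collinear with $AB$ or $BC$, hence not a genuine diagonal. This forces the ear at $B$ to be the triangle $\{B,p_c,q_1\}$, and symmetrically the ear at $C$ is $\{C,q_a,r_1\}$ (for $a,b,c\ge 1$; degenerate sides require only minor bookkeeping that the binomial conventions absorb automatically). Removing both ears reduces the problem to triangulating the polygon $P'$ with vertices $A,p_1,\dots,p_c,q_1,\dots,q_a,r_1,\dots,r_b$, in which the only admissible diagonals from $A$ target the points $q_i$ on $BC$. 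Let $q_{i_1},\dots,q_{i_s}$, $s\ge 1$, be the endpoints of $A$'s corner-side diagonals. A simple collinearity argument shows that $i_1<\cdots<i_s$ must be consecutive integers: otherwise the sub-polygon between $Aq_{i_j}$ and $Aq_{i_{j+1}}$ would require triangulating collinear interior $q$'s with no admissible diagonal. Writing $i_1=\alpha+1$ and $i_s=a-\beta$ with $\alpha,\beta\ge 0$ and $s=a-\alpha-\beta\ge 1$, the two outer sub-polygons become convex polygons whose vertices split into two chains ($\{p_1,\dots,p_c\}$ and $\{q_1,\dots,q_{\alpha+1}\}$ on the $A$-side; $\{q_{a-\beta},\dots,q_a\}$ and $\{r_1,\dots,r_b\}$ on the $C$-side), where only inter-chain diagonals are allowed. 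The classical count of such a two-chain triangulation of an $(m+n)$-gon is $\binom{m+n-2}{m-1}$, whence
\begin{equation*}
\mathsf{tr}_{\mathrm{D}_A}(\Delta(a,b,c)) = \sum_{\substack{\alpha,\beta\ge 0\\ \alpha+\beta\le a-1}}\binom{c+\alpha-1}{c-1}\binom{b+\beta-1}{b-1}.
\end{equation*}
Summing by Vandermonde's convolution in $\alpha$ and then the hockey-stick identity in $\alpha+\beta$ telescopes this to $\binom{a+b+c-1}{a-1}$, and adding the two cyclic analogues yields~\eqref{eq:abc_d}.

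For part~(3) I would set up a direct combinatorial bijection. The idea is to decompose each triangulation of $\Delta(a,b,c)$ into a sequence of \emph{structural blocks} by walking along the boundary from a fixed basepoint and recording, at each of the $a+b+c-1$ non-basepoint subdivision points, a binary label determined by the local structure of the triangulation (roughly, which of the two boundary triangles incident to that subdivision point carries the ``routing'' of the triangulation into the interior). This defines a map from $\mathsf{TR}(\Delta(a,b,c))$ into the set of $2^{a+b+c-1}$ binary words of length $a+b+c-1$. The main claim is that this map is injective and that its image consists of all binary words \emph{except} three pairwise disjoint families of ``overshooting'' words, one per corner, where the block walk would overrun one of $A$, $B$, $C$. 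The number of overshooting words at the corner whose opposite side carries $X\in\{a,b,c\}$ subdivisions should work out to $\sum_{\ell=0}^{X-2}\binom{a+b+c-1}{\ell}$, matching the partial binomial sums in~\eqref{eq:abc_total}; subtracting the three disjoint contributions from $2^{a+b+c-1}$ then yields the claimed formula. The main obstacle is making the notion of ``structural block'' precise enough to simultaneously verify the bijection onto the legal words and the exact binomial count of the illegal ones at each corner.

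Finally, substituting the conclusions of~(1) and~(3) into $\mathsf{tr}_{\mathrm{T}}=\mathsf{tr}-\mathsf{tr}_{\mathrm{D}}$ and applying the elementary identity $\binom{n}{X-1}+\sum_{\ell=0}^{X-2}\binom{n}{\ell}=\sum_{\ell=0}^{X-1}\binom{n}{\ell}$ for each of the three corners collapses the six subtracted binomial expressions into the three partial sums that appear in~\eqref{eq:abc_t}, completing the derivation.
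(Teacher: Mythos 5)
Your part~(1) is essentially sound, and it is a genuinely different argument from the paper's: the paper encodes a $\mathrm{D}_A$-triangulation directly as a $\{0,1\}$-sequence of length $a+b+c-1$ with $a-1$ zeroes (by sweeping a copy of $CB$ pushed infinitesimally into the interior and recording which side of $ABC$ each regular triangle leans on), obtaining $\binom{a+b+c-1}{a-1}$ in one stroke, whereas you condition on the consecutive block of corner-side diagonals and sum. Your summand $\binom{c+\alpha-1}{c-1}\binom{b+\beta-1}{b-1}$ is correct, but only because of a step you never state: in the outer sub-polygon $A,p_1,\dots,p_c,q_1,\dots,q_{\alpha+1}$ you must forbid any further diagonal $Aq_j$ ($j\le\alpha$), which forces the triangle $Ap_1q_{\alpha+1}$ and is what entitles you to delete $A$ and count two-chain triangulations on chains of sizes $c$ and $\alpha+1$. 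Without that observation the naive two-chain count would be $\binom{c+\alpha}{c}$ and the sum would overcount (already for $\Delta(2,1,1)$ it gives $5$ instead of $3$). You should also pin down the degenerate cases $b=0$ or $c=0$ rather than trusting ``binomial conventions,'' since $\binom{-1}{-1}$ versus $\binom{n}{-1}$ matters for the Vandermonde telescoping to come out to $\binom{a+b+c-1}{a-1}$.

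The genuine gap is part~(3), and with it part~(2), which you derive from~(3). What you offer for~(3) is not a proof but a wish: the ``structural block'' decomposition, the binary label attached to each subdivision point, the injectivity of the resulting map, the characterization of its image as the complement of three disjoint families of ``overshooting'' words, and the claim that the family at each corner has exactly $\sum_{\ell=0}^{X-2}\binom{a+b+c-1}{\ell}$ elements are all left undefined and unverified --- you acknowledge this yourself in your last sentence of that paragraph. None of these is routine; in particular there is no obvious single binary statistic at a subdivision point that makes the image exactly the stated set, and the three ``illegal'' families must moreover be shown disjoint. The paper takes the opposite route: it proves~(2) directly and obtains~(3) as $(1)+(2)$. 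For~(2) it removes the unique central triangle of a $\mathrm{T}$-triangulation, which splits $T$ into three triangulations of $\Delta(a_2,b_1,0)$, $\Delta(b_2,c_1,0)$, $\Delta(c_2,a_1,0)$ with $a_1+a_2=a-1$, etc.; since $\mathsf{tr}(\Delta(a,b,0))=\binom{a+b}{a}$, this gives $\mathsf{tr}_{\mathrm{T}}(\Delta(a,b,c))=[x^ay^bz^c]\,xyz/\bigl((1-x-y)(1-y-z)(1-z-x)\bigr)$, and two applications of Chu--Vandermonde turn the coefficient into the right-hand side of \eqref{eq:abc_t}. If you replace your part~(3) by this argument for part~(2) (your part~(1) plus the dichotomy $\mathsf{tr}=\mathsf{tr}_{\mathrm{T}}+\mathsf{tr}_{\mathrm{D}}$ then yields~(3) by the same elementary binomial identity you quote), the proof closes.
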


\begin{proof} (1)
We first show that
\begin{equation}\label{eq:abc_d0}
\mathsf{tr}_{\mathrm{D}_A}(\Delta(a,b,c))=\binom{a+b+c-1}{a-1}.
\end{equation}

In order to see that, consider $T$,
a $\mathrm{D}_A$-triangulation of $\Delta(a,b,c)$.
The triangles of $T$ can be linearly ordered as follows.
Consider the directed segment $CB$, and
shift it slightly
(``infinitesimally'') into the interior of $ABC$.
The segment obtained in this way intersects all the triangles of $T$
and, thus, induces a linear order on them.

By Observation~\ref{obs:prop_abc}(1),
each regular triangle of $T$
shares exactly one edge with one of the sides of $ABC$.
We encode the regular triangles that share an edge with $CB$ by $0$,
and those that share an edge with $CA$ or with $AB$ by $1$.
Using the linear order that was described above,
we obtain a $\{0,1\}$-sequence of length $a+b+c-1$,
in which $0$ occurs $a-1$ times
and
$1$ occurs $b+c$ times.
See Figure~\ref{fig:dtlin} for an illustration.
It is easy to see that this correspondence between
$\mathrm{D}_A$-triangulations of $\Delta(a,b,c)$
and $\{0,1\}$-sequences with
$a-1$ occurrences of $0$ and $b+c$ occurrences of $1$
is bijective.
(In particular, since $b$ and $c$ are fixed,
it is determined uniquely whether a triangle encoded by $1$ shares an edge with
$CA$ or with $AB$.)
Since the number of such sequences is $\binom{a+b+c-1}{a-1}$,
we obtain~\eqref{eq:abc_d0}.
Finally, due to symmetry, we get~\eqref{eq:abc_d}.
\begin{figure}
\begin{center}
\includegraphics[scale=0.75]{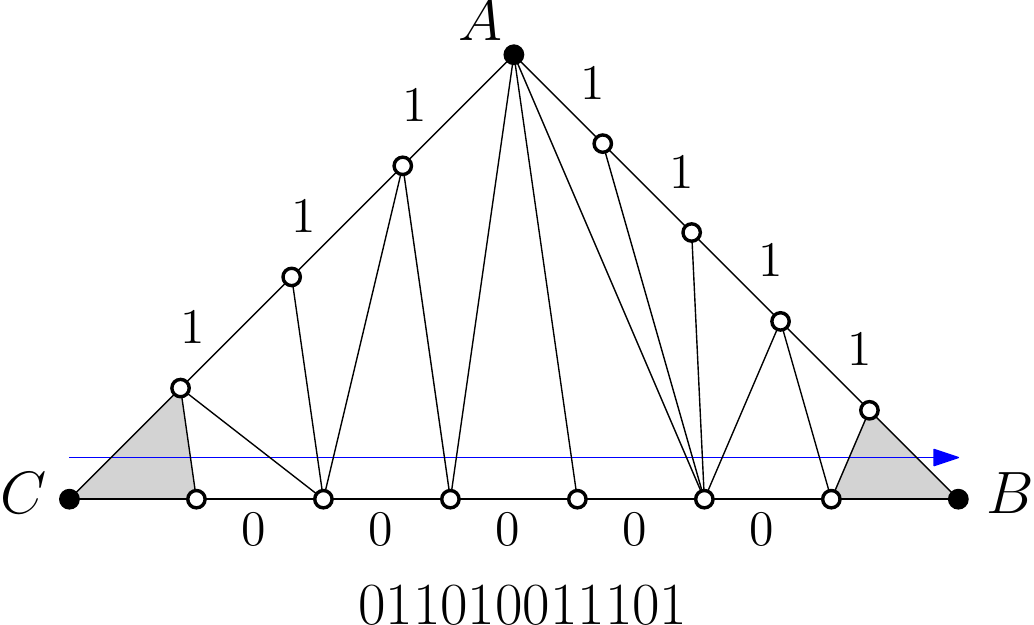}
\end{center}
\caption{Illustration for the proof of Theorem~\ref{prop:abc_all}.1.}
\label{fig:dtlin}
\end{figure}

\begin{remark} A special case of ~\eqref{eq:abc_d0},
the formula $\mathsf{tr}(\Delta(a,b,0))=\binom{a+b}{a}$,
was already mentioned in~\cite{hn}.
\end{remark}

(2) Now we derive the formula
\eqref{eq:abc_t}
for the number of $\mathrm{T}$-triangulations
of $\Delta(a,b,c)$.
By definition and by Observation~\ref{obs:prop_abc}(3),
any $\mathrm{T}$-triangulation $T$ of $\Delta(a,b,c)$ has a
unique central triangle.
If we remove the central triangle from $T$,
then $T$ decomposes into three triangulations:
a triangulation of $\Delta(a_2,b_1,0)$,
a triangulation of $\Delta(b_2,c_1,0)$, and
a triangulation of $\Delta(c_2,a_1,0)$,
where $a_1+a_2 = a-1$, $b_1+b_2 = b-1$, $c_1+c_2 = c-1$.
Conversely, each (appropriately combined) triple of such triangulations
generates a $\mathrm{T}$-triangulation of $\Delta(a,b,c)$.
Since, as mentioned above, we have $\Delta(a,b,0)=\binom{a+b}{a}$,
and since $\frac{1}{1-x-y}$ is the bivariate generating function
for the array
$\left(\binom{a+b}{a}\right)_{a, b \geq 0}$,
we conclude that $\frac{xyz}{(1-x-y)(1-y-z)(1-z-x)}$
is the trivariate generating function for
$(\mathsf{tr}_{\mathrm{T}}(\Delta(a,b,c)))_{a, b, c \geq 0}$.
To be precise, for each fixed triple $(a,b,c)$, we have
\begin{equation}\label{eq:tri}
\mathsf{tr}_{\mathrm{T}}(\Delta(a,b,c)) = [x^a y^b z^c] \,
\frac{xyz}{(1-x-y)(1-y-z)(1-z-x)}.
\end{equation}

In order to extract the coefficients, we ignore the factor $xyz$ in
the numerator for a while. We have
\begin{align}
 [x^a y^b z^c] \, \frac{1}{(1-x-y)(1-y-z)(1-z-x)}  \notag
& = \sum_{i=0}^{a} \sum_{j=0}^{b} \left( \binom{i+j}{i}  \cdot
 \sum_{k=0}^{c} \binom{b-j+k}{b-j} \binom{a-i+c-k}{a-i} \right) \notag
 \\
& = \sum_{i=0}^{a} \sum_{j=0}^{b} \binom{i+j}{i}
 \binom{a+b+c+1-i-j}{a+b+1-i-j}  \notag \\
& = \sum_{i=0}^{a} \sum_{j=0}^{b} \binom{i+j}{i}  \binom{a+b+c+1-i-j}{c}.
\label{eq:abc1}
\end{align}
For the second equality we used the standard combinatorial identity
\[
\sum_{i=0}^{\ell} \binom{m+i}{m} \binom{n+\ell-i}{n} =
\binom{m+n+\ell+1}{m+n+1},
\]
which is a special instance of Chu--Vandermonde summation.
We may use it again in order to evaluate
the inner sum of the remaining double sum,
for $0 \leq j \leq a+b+1-i$ rather than $0 \leq j \leq b$:
\begin{equation}\label{eq:abc2}
\sum_{j=0}^{a+b+1-i} \binom{i+j}{i}  \binom{a+b+c+1-i-j}{c} =
\binom{a+b+c+2}{c+1+i}.
\end{equation}
Now we continue simplifying \eqref{eq:abc1}.
We use \eqref{eq:abc2} and subtract the extra terms
which also have this form (up to an interchange of the summations over
$i$ and $j$).
Writing $s=a+b+c+2$, we have
\begin{align*}
 \sum_{i=0}^{a} \sum_{j=0}^{b}& \binom{i+j}{i}  \binom{a+b+c+1-i-j}{c}    \\
& = \sum_{i=0}^{a} \sum_{j=0}^{a+b+1-i} \binom{i+j}{i}
  \binom{a+b+c+1-i-j}{c}   \ - \
\sum_{j=b+1}^{a+b+1} \sum_{i=0}^{a+b+1-j} \binom{i+j}{i}
\binom{a+b+c+1-i-j}{c}  \\
& =\sum_{i=0}^{a}\binom{s}{c+1+i} - \sum_{j=b+1}^{a+b+1} \binom{s}{c+1+j}
\ = \ \sum_{\ell=c+1}^{a+c+1}\binom{s}{\ell} -
\sum_{\ell=b+c+2}^{a+b+c+2} \binom{s}{\ell} \ = \
\sum_{\ell=c+1}^{a+c+1}\binom{s}{\ell} - \sum_{\ell=0}^{a}
\binom{s}{\ell}  \\
& =\sum_{\ell=0}^{s}\binom{s}{\ell} - \sum_{\ell=0}^{a}
\binom{s}{\ell} - \sum_{\ell=0}^{c} \binom{s}{\ell} -
\sum_{\ell=a+c+2}^{s} \binom{s}{\ell} \ = \ 2^s - \sum_{\ell=0}^{a}
\binom{s}{\ell} - \sum_{\ell=0}^{b} \binom{s}{\ell} -
\sum_{\ell=0}^{c} \binom{s}{\ell}. \end{align*}
Taking into account the factor $xyz$ in \eqref{eq:tri}, we obtain
\eqref{eq:abc_t}.

\medskip
(3) Finally, we obtain \eqref{eq:abc_total} by adding
\eqref{eq:abc_d} and \eqref{eq:abc_t}.
\end{proof}

\begin{remarks}(1) For certain specific choices of parameters,
formulas that can be further simplified can be obtained.
For example, we have $\mathsf{tr}_{\mathrm{T}}(\Delta(a,b,1))=\binom{a+b}{a}-1$.
Recall that $\mathsf{tr}(\Delta(a,b,0))=\binom{a+b}{a}$.
We leave it as an exercise for the reader to find a (simple)
``almost bijection'' between $\mathsf{TR}_{\mathrm{T}}(\Delta(a,b,1))$
and $\mathsf{TR}(\Delta(a,b,0))$.

\medskip
(2) Item (1) of
Theorem~\ref{prop:abc_all}
can also be proven in a way similar to our proof of
Item~(2) ---
by considering a trivariate generating function and extracting coefficients.
Doing this, we obtain
$\mathsf{tr}_{\mathrm{D}_A}(\Delta(a,b,c)) = [x^a y^b z^c]
\frac{xyz}{(1-x)(1-x-y)(1-x-z)}$,
and similarly for
$\mathsf{tr}_{\mathrm{D}_B}(\Delta(a,b,c))$ and $\mathsf{tr}_{\mathrm{D}_C}(\Delta(a,b,c))$.
\end{remarks}

Next we prove the announced generalization of Formula~\eqref{eq:k=3C}
to the non-balanced case.

\def\aa{\alpha}
\def\bb{\beta}
\def\gg{\gamma}
\begin{theorem}\label{prop:abc_bl}
For any non-negative integers
$a,b,c$,
we have
\begin{equation}\label{eq:abc_bl}
\mathsf{tr}(\Delta(a,b,c)) =
\sum_{\aa, \bb, \gg \geq 0}
\binom{a}{\aa+\bb}
\binom{b}{\bb+\gg}
\binom{c}{\gg+\aa}.
\end{equation}
\end{theorem}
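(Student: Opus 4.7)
The plan is to compute the trivariate generating function
$F(x,y,z)=\sum_{a,b,c\ge 0}\mathsf{tr}(\Delta(a,b,c))\,x^ay^bz^c$
in closed form, and then to extract the coefficient of $x^ay^bz^c$ to read off the triple sum in \eqref{eq:abc_bl}.

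First, I would use the decomposition $\mathsf{tr}=\mathsf{tr}_{\mathrm{T}}+\mathsf{tr}_{\mathrm{D}_A}+\mathsf{tr}_{\mathrm{D}_B}+\mathsf{tr}_{\mathrm{D}_C}$ (valid for $(a,b,c)\neq(0,0,0)$) from Theorem~\ref{prop:abc_all}, together with $\mathsf{tr}(\Delta(0,0,0))=1$. The generating function for the $\mathsf{tr}_{\mathrm{T}}$ piece is already recorded in \eqref{eq:tri}, while the formula $\mathsf{tr}_{\mathrm{D}_A}(\Delta(a,b,c))=\binom{a+b+c-1}{a-1}$ from \eqref{eq:abc_d0} yields
$\sum_{a\ge 1,\,b,c\ge 0}\mathsf{tr}_{\mathrm{D}_A}(\Delta(a,b,c))\,x^ay^bz^c=\frac{x(1-x)}{(1-x-y)(1-x-z)}$
by applying $\sum_{n\ge 0}\binom{n+k}{k}t^n=(1-t)^{-(k+1)}$ three times; cyclic permutations give the analogous expressions for $D_B$ and $D_C$. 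Collecting all four pieces over the common denominator $(1-x-y)(1-y-z)(1-z-x)$, and adding the constant $1$ for $(a,b,c)=(0,0,0)$, the claim $F(x,y,z)=\frac{(1-x)(1-y)(1-z)}{(1-x-y)(1-y-z)(1-z-x)}$ reduces to the polynomial identity
$$(1-x-y)(1-y-z)(1-z-x)+xyz+x(1-x)(1-y-z)+y(1-y)(1-z-x)+z(1-z)(1-x-y)=(1-x)(1-y)(1-z),$$
both sides of which expand to $1-x-y-z+xy+yz+zx-xyz$.

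Second, to extract coefficients I would exploit the identity $(1-x)(1-y)-xy=1-x-y$, which gives $\frac{(1-x)(1-y)}{1-x-y}=\sum_{\bb\ge 0}\bigl(\frac{xy}{(1-x)(1-y)}\bigr)^\bb$, and similarly for the two cyclic partners. Pulling out a common prefactor $\frac{1}{(1-x)(1-y)(1-z)}$ transforms the closed form into
$$F(x,y,z)=\sum_{\aa,\bb,\gg\ge 0}\frac{x^{\aa+\bb}y^{\bb+\gg}z^{\gg+\aa}}{(1-x)^{\aa+\bb+1}(1-y)^{\bb+\gg+1}(1-z)^{\gg+\aa+1}},$$
and the standard identity $[t^n]\frac{t^k}{(1-t)^{k+1}}=\binom{n}{k}$ immediately reads off $\binom{a}{\aa+\bb}\binom{b}{\bb+\gg}\binom{c}{\gg+\aa}$ as the coefficient of $x^ay^bz^c$. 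This is exactly the right-hand side of \eqref{eq:abc_bl}.

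The only substantive obstacle is the polynomial identity needed to simplify $F(x,y,z)$: while purely routine, the bookkeeping across the seven additive terms (some of degree three) must be done with care. Once that identity is in hand, the coefficient extraction via the ``reverse'' geometric-series manipulation is clean and transparent, and the cyclic symmetry of the final sum in $(\aa,\bb,\gg)$ emerges naturally from the cyclic symmetry of the denominator of $F$.
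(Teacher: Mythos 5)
Your proof is correct, but it takes a genuinely different route from the paper. The paper proves \eqref{eq:abc_bl} bijectively: it matches triangulations of $\Delta(s_0,s_1,s_2)$ with \emph{fundamental sets} of pairwise disjoint diagonals joining interior points of different sides, a fundamental set of type $(\alpha_0,\alpha_1,\alpha_2)$ being determined by its endpoint set and hence counted by $\binom{s_0}{\alpha_0+\alpha_1}\binom{s_1}{\alpha_1+\alpha_2}\binom{s_2}{\alpha_2+\alpha_0}$; the inverse map is reconstructed via the ``modified fundamental set'' $F'$ and a block decomposition. You instead deduce the theorem from Theorem~\ref{prop:abc_all} by assembling the trivariate generating function
\begin{equation*}
F(x,y,z)=\frac{(1-x)(1-y)(1-z)}{(1-x-y)(1-y-z)(1-z-x)}
\end{equation*}
and re-expanding it; I checked the pieces: the $\mathrm{D}_A$-series is indeed $\frac{x(1-x)}{(1-x-y)(1-x-z)}$, the seven-term polynomial identity does collapse to $1-x-y-z+xy+yz+zx-xyz$, and the geometric-series trick based on $(1-x)(1-y)-xy=1-x-y$ (all three factors having no constant term, so the expansions are legitimate formal power series and the triple sum is locally finite) yields exactly the claimed coefficient. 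The trade-offs: your argument is shorter \emph{given} Theorem~\ref{prop:abc_all}, and it produces the compact closed form for $F(x,y,z)$ as a bonus, but it inherits the full strength of that theorem (in particular the nontrivial double-sum evaluation behind \eqref{eq:abc_t}) and leaves the individual summands of \eqref{eq:abc_bl} uninterpreted; the paper's bijection is self-contained, explains combinatorially what each term $\binom{a}{\alpha+\beta}\binom{b}{\beta+\gamma}\binom{c}{\gamma+\alpha}$ counts, and exposes a structural block decomposition of triangulations that the generating-function computation hides.
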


\begin{proof}
We use a uniform notation
similarly to the notation that we used for the balanced case
(see Figure~\ref{fig:abc_bl}).
We denote the corners of the triangle by
$P_0=P_{0,0}$, $P_1=P_{1,0}$, $P_2=P_{2,0}$ (say, clockwise),
with arithmetic $\mathrm{mod}$~$3$ in the first index.
For each $i \in \{0,1,2\}$, the side $P_{i} P_{i+1}$
is subdivided by $s_i$ points
$P_{i, 1}, P_{i, 2}, \dots, P_{i, s_i}$
(in the direction from $P_{i}$ to $P_{i+1}$).
Moreover, we set
$P_{i, s_i+1} = P_{i+1}$.
\begin{figure}
\begin{center}
\includegraphics[scale=0.9]{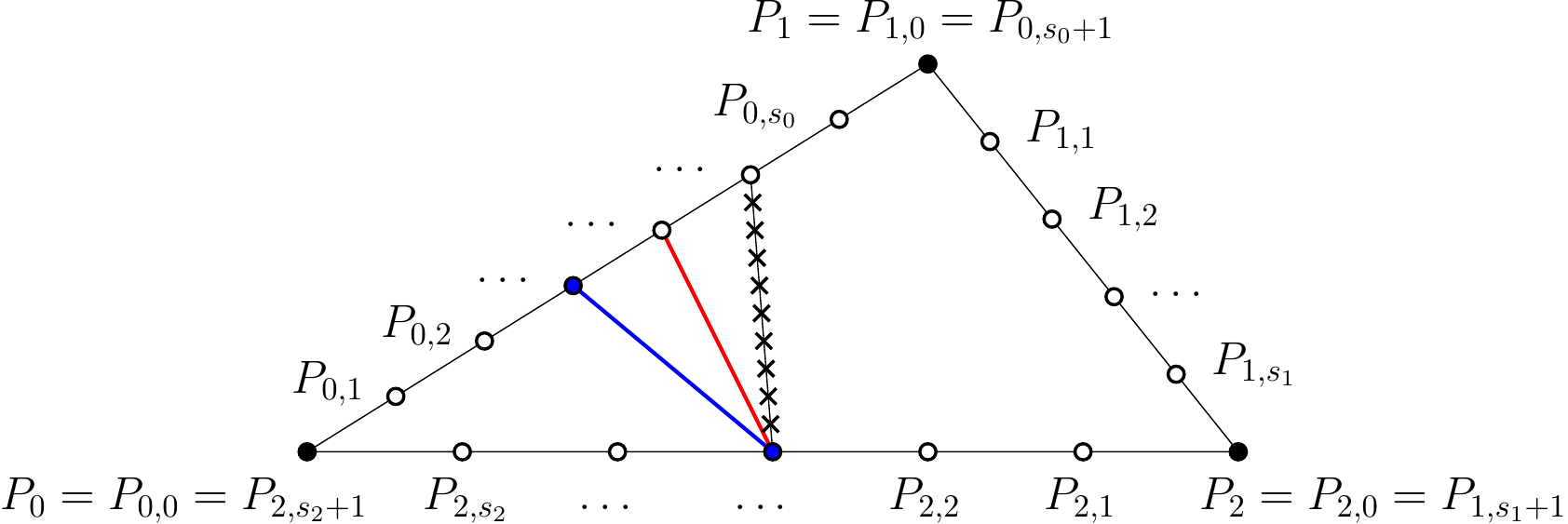}
\end{center}
\caption{Illustration for the proof of Theorem~\ref{prop:abc_bl}:
notation and definition of $F_T$.
The diagonals shown in blue and red belong to $T$;
the diagonal shown by crosses does not belong to $T$.
Hence, the blue diagonal belongs to $F_T$.}
\label{fig:abc_bl}
\end{figure}
In this notation, Formula~\eqref{eq:abc_bl} reads
\begin{equation} \label{eq:abc_bl1}
\mathsf{tr}(\Delta(s_0,s_1,s_2)) =
\sum_{\aa_1, \aa_2, \aa_3 \geq 0}
\binom{s_0}{\aa_0+\aa_1}
\binom{s_1}{\aa_1+\aa_2}
\binom{s_2}{\aa_2+\aa_3}.
\end{equation}

Let $F$ be some (possibly empty)
set of diagonals of $\Delta(s_0,s_1,s_2)$
which connect \textbf{interior} points of two sides of the basic triangle
(that is, $F$ does not contain corner-side diagonals),
and which are pairwise disjoint
(that is, they are not only non-crossing but also do not share endpoints).
Such sets will be called \textit{fundamental sets} (of diagonals of $\Delta(s_0,s_1,s_2)$).
Each diagonal in a fundamental set $F$ can be uniquely represented as
$P_{i-1, \ell} P_{i, m}$ for some $i\in\{0,1,2\}$,
$1 \leq \ell \leq s_{i-1}$,
$1 \leq m \leq s_{i}$.
We say that this diagonal
\emph{separates} the corner $P_{i}$.

We say that a fundamental set $F$ has {\it type} $(\aa_0, \aa_1, \aa_2)$
if, for $i \in \{0,1,2\}$,
the number of elements of $F$ that separate the corner $P_i$ is exactly $\aa_i$.
Notice that $F$ is uniquely determined by the set of the endpoints of its elements.
Indeed, if, for $i\in\{0,1,2\}$,
exactly $\bb_i$ endpoints of the elements of $F$ lie on $P_iP_{i+1}$,
then the type of $F$ is $(\aa_0, \aa_1, \aa_2)$, where
$\aa_i = (\bb_{i-1}+\bb_{i} - \bb_{i+1})/2$.
Once we know the set of endpoints of the elements of $F$ and its
type, the elements of $F$ themselves can be identified at once.
It follows that the number of fundamental sets of type $(\aa_0, \aa_1, \aa_2)$
is $\binom{s_0}{\aa_0+\aa_1}
\binom{s_1}{\aa_1+\aa_2}
\binom{s_2}{\aa_2+\aa_3}$,
and the total number of fundamental sets is precisely the right-hand side of
\eqref{eq:abc_bl1}.
Thus, in order to prove the claim, it suffices to find a bijection
between the set of triangulations of
$\Delta(s_0,s_1,s_2)$
and the set of its fundamental sets.

Let $T$ be a triangulation of $\Delta(s_0,s_1,s_2)$. We define
\[
F_T := \left\{
\begin{array}{cl}
  P_{i-1, \ell} P_{i, m}\colon  & i\in\{0,1,2\}, \
1 \leq \ell \leq s_{i-1}, \
1 \leq m \leq s_{i};
 \\
   & P_{i-1, \ell} P_{i, m} \in T, \
P_{i-1, \ell} P_{i, m+1} \in T, \
P_{i-1, \ell} P_{i, m+2} \not \in T
\end{array}
\right\}.
\]
(Notice that, if $m = s_{i}$,
then $P_{i-1, \ell} P_{i, m+1}$ is a corner-side diagonal,
and the last condition, $P_{i-1, \ell} P_{i, m+2} \not \in T$, is satisfied automatically.)
Figure~\ref{fig:abc_bl} illustrates this definition:
the diagonal coloured blue satisfies the just described condition
and, therefore, is an element of $T_F$.

It is easy to verify that $F_T$ is a fundamental set.
Moreover, next we show that,
given a fundamental set $F$, there is a unique triangulation $T$
such that $F_T=F$.
This triangulation $T$ can be reconstructed from $F$ by applying the following procedure.

Given $F$, we define another set of diagonals
(a \textit{modified fundamental set}), by
  \[
  F'=\{P_{i-1, \ell} P_{i, m+1} \colon \ \ P_{i-1, \ell} P_{i, m} \in F\}.
  \]
In addition, for each corner $P_i$ such that $F'$ contains no corner-side diagonal
  one of whose endpoints is $P_i$, we
  add the ear diagonal $P_{i-1, s_{i-1}} P_{i, 1}$ to $F'$.
  See Figure~\ref{fig:abc_bl_rule}(a):
  a ``generic'' element of $F$ is coloured blue,
  the corresponding element of $F'$ is coloured red;
  another diagonal is coloured red because it is an ear diagonal.
\begin{figure}
\begin{center}
\includegraphics[scale=0.95]{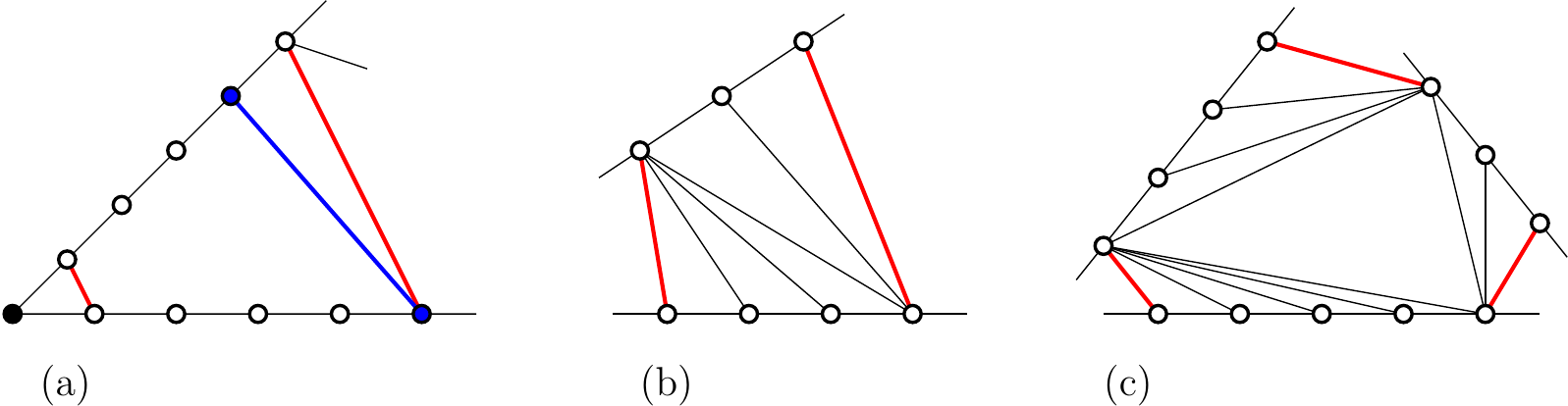}
\end{center}
\caption{Rules for reconstructing $T$ from $F=F_T$.
Blue diagonals are the elements of $F$.
Red diagonals are the elements of $F'$.
(a) Definition of $F'$.
(b) Triangulation of a block bounded by two elements of $F'$.
(c) Triangulation of a block bounded by three elements of $F'$.
}
\label{fig:abc_bl_rule}
\end{figure}

\begin{figure}
\begin{center}
\includegraphics[scale=0.9]{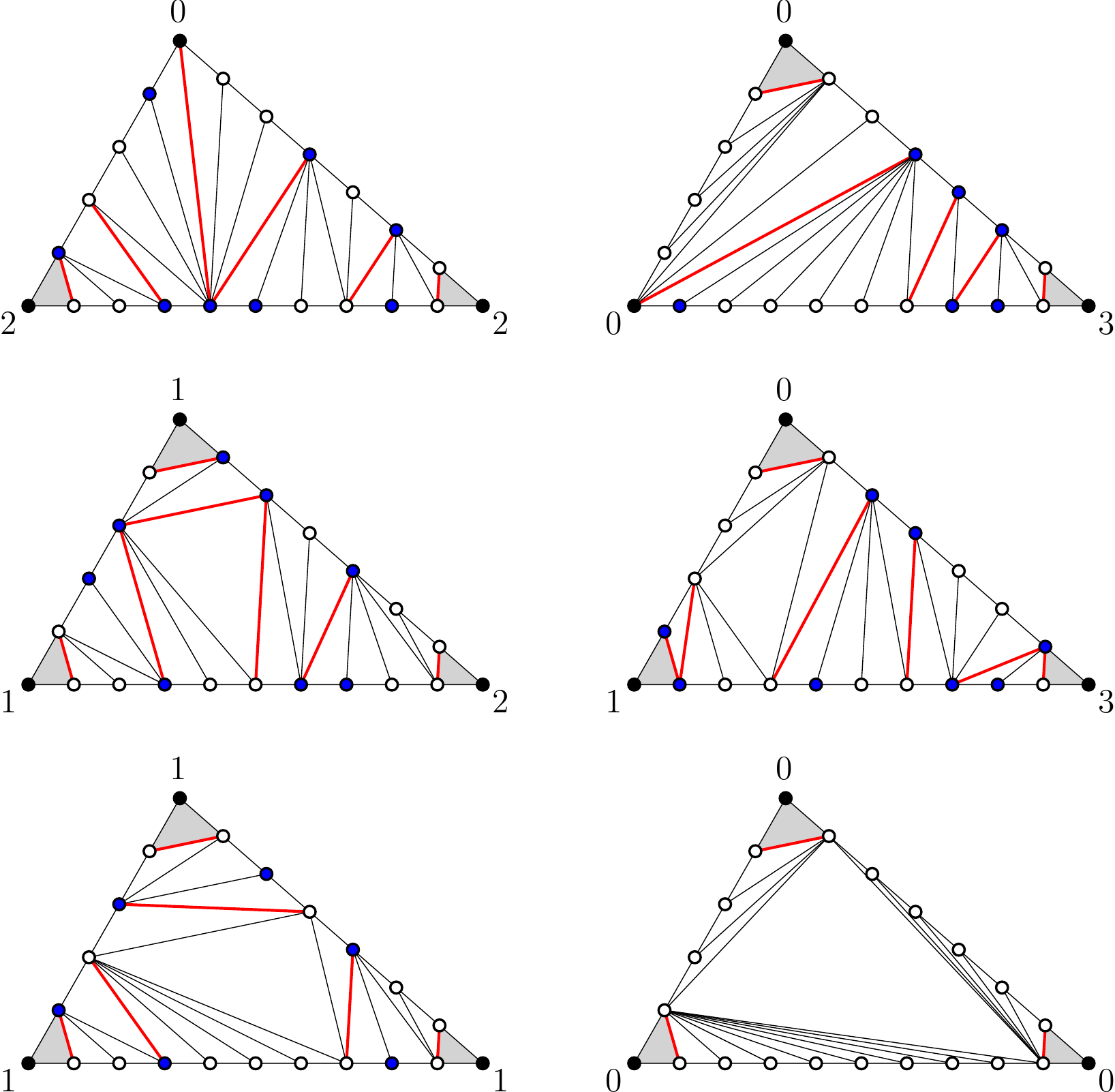}
\end{center}
\caption{Reconstructing $T$ from $F=F_T$.
Blue points are the endpoints of the elements of $F$.
Red diagonals are the elements of $F'$.
The numbers at the corners are $\aa_0$, $\aa_1$ and $\aa_2$.
}
\label{fig:abc_bl_ex}
\end{figure}

The elements of $F'$ are not necessarily disjoint ---
  they can share endpoints, --- but still they are non-crossing.
  Therefore they partition $\Delta(s_0,s_1,s_2)$ into several parts
  that we call \emph{blocks}.
  The boundary of each block contains at most three elements of $F'$
  (in fact, we have two or three ears whose boundaries contain exactly one element of $F'$,
  at most one block whose boundary contains three elements of $F'$,
  and all other blocks whose boundaries contain exactly two elements of $F'$).

Then we complete $F'$ to a triangulation of $\Delta(s_0,s_1,s_2)$
by triangulating the blocks according to the following rules:
\begin{itemize}
  \item Suppose $B$ is a block whose boundary contains exactly two elements of $F'$:
  $P_{i-1, \ell'}P_{i, m}$ and $P_{i-1, \ell}P_{i, m'}$, where
  $i \in \{0,1,2\}$,
   $0 \leq \ell \leq \ell' \leq s_{i-1}$,
  $1 \leq m \leq m' \leq s_{i}+1$.
  Then we add the diagonal $P_{i-1, \ell}P_{i, m}$
  (unless it belongs to $F'$, which would happen if we have
  $\ell=\ell'$ or $m=m'$).
  At this point there is only one way to complete the triangulation of $B$.
  See Figure~\ref{fig:abc_bl_rule}(b).
  \item Suppose $B$ is a block whose boundary contains three elements of $F'$:
  $P_{i-1, \ell'}P_{i, m}$, $P_{i, m'}P_{i+1, p}$, and $P_{i+1, p'}P_{i+1, \ell}$, where
  $i \in \{0,1,2\}$,
   $1 \leq \ell \leq \ell' \leq s_{i-1}$,
$1 \leq m \leq m' \leq s_{i}$,
$1 \leq p \leq p' \leq s_{i+1}$
Then we add three diagonals
(or, more precisely: those of them that do not belong to $F'$)
that form the triangle
$P_{i-1, \ell}P_{i, m}P_{i+1, p}$.
  At this point there is only one way to complete the triangulation of $B$.
  See Figure~\ref{fig:abc_bl_rule}(c).
\end{itemize}

Once this is done for all blocks, we have a triangulation $T$ of $\Delta(s_0,s_1,s_2)$.
It is routine to verify that $T$
contains all the elements of $F$,
and that $T$ is the unique triangulation of $\Delta(s_0,s_1,s_2)$
such that $F_T = F$.
See Figure~\ref{fig:abc_bl_ex} for some examples.

We established a bijection between the set of triangulations of
$\Delta(s_0,s_1,s_2)$
and the set of its fundamental sets. As explained above,
this completes the proof of the claim.

To summarize:
while \emph{fundamental sets} are clearly enumerated
by the right-hand side of \eqref{eq:abc_bl},
it is \emph{modified fundamental sets} that describe
a very natural structural decomposition of triangulations into blocks.
\end{proof}

\section{Asymptotics}\label{sec:asy}

Here, we determine the asymptotic behaviour of $\trkr{k}{r}$.
Our starting point is another integral representation of
$\trkr{k}{r}$. It is motivated by the fact that the integrand
in \eqref{eq:intFormel}, $I_{r,k}(t)$ say,
has one saddle point at $t=1/2$ for large
$k$ and/or~$r$, which is easily verified by solving the saddle
point equation $\frac {d} {dt}I_{r,k}(t)=0$ for large $k$ and/or~$r$.%
\footnote{
Strictly speaking, the point $t=1/2$ is not a saddle point
of the function $t\to \vert I_{r,k}(t)\vert$, since its value at
$t=1/2$ vanishes, that is, $I_{r,k}(1/2)=0$. However, this is ``just"
caused by the factor $(1-2t)^2$ in the numerator (the factor $(1-2t)^k$
in the denominator cancels with $\big((1-t)^{r+1}-t^{r+1}\big)^k$ in the
numerator). If we would ignore the factor $(1-2t)^2$, that is, if we
would instead
consider $I_{r,k}(t)/(1-2t)^2$, then $t=1/2$ is a true saddle point.
So, ``morally," the point $t=1/2$ {\it is} a saddle point of $t\to
\vert I_{r,k}(t)\vert$,
in the sense that the main contribution to the integral comes from a small
environment around $t=1/2$. The ``only" effect of the factor
$(1-2t)^2$ is to lower the polynomial factor in the asymptotic
approximation, while the exponential growth is not affected.}
(The subsequent arguments can however be followed without that
observation.)

\begin{proposition} \label{prop:saddle}
For all positive integers $k$ and $r$ with $rk\ge3$, we have
\begin{equation} \label{eq:intFormel2}
\trkr{k}{r}=
-\frac {2^{(r-2)k}} {\pi }\int_{-\infty} ^\infty
\frac {du} {(1+4u^2)^{rk}
(iu)^{k-2}}
\left(
\Big(1+2iu\Big)^{r+1}
-\Big(1-2iu\Big)^{r+1}
\right)^k.
\end{equation}
\end{proposition}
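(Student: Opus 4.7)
The plan is to deform the small contour $\mathcal{C}$ encircling the origin in \eqref{eq:intFormel} into the vertical line $\mathrm{Re}(t)=1/2$ through the saddle point $t=1/2$, and then parametrise this line by $t=1/2+iu$ with $u\in\mathbb{R}$.

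The first step is to justify the contour deformation. For large $R$, I would consider the closed, positively oriented curve $\Gamma_R$ built from the vertical segment $1/2-iR\to 1/2+iR$ closed by the left half-circle of radius $R$ centred at $1/2$. This $\Gamma_R$ encloses exactly the singularities of the integrand of \eqref{eq:intFormel} lying in the half-plane $\mathrm{Re}(t)<1/2$; the pole $t=1$ lies to the right and so is irrelevant, while $t=1/2$ is a removable singularity since $(1-t)^{r+1}-t^{r+1}$ is divisible by $(1-2t)$ and therefore the $k$th power in the numerator cancels the factor $(1-2t)^{k-2}$ in the denominator (leaving the factor $(1-2t)^2$ already mentioned in the footnote). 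Thus the only singularity enclosed by $\Gamma_R$ is $t=0$, and by Cauchy's theorem the integrals along $\mathcal{C}$ and along $\Gamma_R$ coincide.

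Next I would dispose of the semicircular arc. The decay estimate recorded in the proof of Proposition~\ref{prop:Formel} shows that under the hypothesis $rk\ge 3$ the integrand is $O(t^{-2})$ as $|t|\to\infty$, so the arc integral is $O(R^{-1})$ and vanishes as $R\to\infty$. Letting $R\to\infty$ and parametrising $t=1/2+iu$ (with $dt=i\,du$) then gives
\[
\trkr{k}{r}=-\frac{1}{4\pi i}\int_{-\infty}^{\infty} I_{r,k}\!\left(\tfrac12+iu\right)\,i\,du,
\]
where $I_{r,k}(t)$ denotes the integrand of \eqref{eq:intFormel}.

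The remaining step is algebraic simplification using the identities
\[
t(1-t)=\tfrac14(1+4u^2),\qquad 1-2t=-2iu,\qquad (1-t)^{r+1}-t^{r+1}=2^{-(r+1)}\bigl((1-2iu)^{r+1}-(1+2iu)^{r+1}\bigr),
\]
which express every factor of the integrand in terms of $1+4u^2$, $iu$, and $(1\pm 2iu)^{r+1}$. The sign $(-1)^k$ arising from $(1-2iu)^{r+1}-(1+2iu)^{r+1}=-\bigl((1+2iu)^{r+1}-(1-2iu)^{r+1}\bigr)$ combines with the $(-1)^{k-2}$ extracted from $(-2iu)^{k-2}$ to disappear; the net exponent of $2$ collected from $4^{rk}$, $2^{-(r+1)k}$, and $2^{-(k-2)}$ is $2rk-(r+1)k-(k-2)=(r-2)k+2$, so that combined with the prefactor $-1/(4\pi)$ one obtains the factor $-2^{(r-2)k}/\pi$ displayed in~\eqref{eq:intFormel2}. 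I expect the only real difficulty to be this final bookkeeping of signs and powers of~$2$; the analytic content (contour deformation plus arc vanishing) is essentially forced by the already-established $O(t^{-2})$ estimate.
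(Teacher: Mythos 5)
Your proposal is correct and follows essentially the same route as the paper's proof: deform $\mathcal C$ onto the vertical line $\Re(t)=\tfrac12$ closed by a left half-circle, use the removability of $t=\tfrac12$ and the $O(t^{-2})$ decay to discard the arc, and substitute $t=\tfrac12+iu$. Your explicit bookkeeping of the signs and powers of $2$ (net exponent $(r-2)k+2$ combining with $-1/(4\pi i)\cdot i\,du$ to give $-2^{(r-2)k}/\pi$) checks out and merely fills in what the paper calls ``little rearrangement.''
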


\begin{proof}
We start with the integral representation \eqref{eq:intFormel}.
We deform the contour $\mathcal C$ so that it passes through the point
$t=1/2$. More precisely, we consider
the family of contours
\begin{equation} \label{eq:contour}
\left\{t:\Re(t)=\tfrac {1} {2}\text{ and }\vert\Im(t)\vert\le \rho\right\}\cup
\left\{t:\vert t-\tfrac {1} {2}\vert=\rho\text{ and }\Re(t)\le \tfrac {1}
        {2}\right\},
\end{equation}
parametrized by positive real numbers $\rho\ge1$, which are supposed to
be oriented in positive direction. In other words, these contours
consist of a vertical straight line segment of length $2\rho$
whose midpoint is $1/2$, and
the left half-circle whose diameter is this very segment.
The integral over these contours still equals $\trkr{k}{r}$
since $t=1/2$ is a
removable singularity of the integrand.

Now we let $\rho\to\infty$. As we already observed in the proof of
Proposition~\ref{prop:Formel}, the integrand is of the order
$O(t^{-2})$ as $\vert t\vert\to\infty$
under our assumptions. Consequently, the integral
over the circle segment of the contour \eqref{eq:contour} will
tend to zero as $\rho\to\infty$. Thus, the number $\trkr{k}{r}$ equals the
integral over the straight line $\{t:\Re(t)=1/2\}$. If we
set $t=\frac {1} {2}+iu$ in \eqref{eq:intFormel}, then we obtain
\eqref{eq:intFormel2} after little rearrangement.
\end{proof}

The integral representation in Proposition~\ref{prop:saddle} now
allows for a convenient asymptotic analysis of $\trkr{k}{r}$.
We distinguish between two scenarios: (1) the number $k$
of corners is fixed, while the number of subdivisions $r$ tends to
infinity;
(2) $k$ tends to infinity, leaving it open whether $r$ remains
fixed or not.

\begin{theorem} \label{thm:rinf}
For fixed $k\ge3$, we have
\begin{equation} \label{eq:rinf}
\trkr{k}{r}=
\frac {2^{(r-1)k}r^{k-3}} {\pi }
\left(\int_{-\infty} ^\infty
\frac {du} {u^{k-2}}
\sin^k(2u)\right)\Big(1+o(1)\Big),
\quad \quad \text{as }r\to\infty.
\end{equation}
\end{theorem}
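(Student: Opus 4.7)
\emph{Proof plan.}
My plan is to extract the leading-order asymptotics directly from the integral representation \eqref{eq:intFormel2} by rescaling at the saddle point. Since the integrand concentrates near $u=0$ on a scale $u\sim 1/r$ as $r\to\infty$, the natural substitution is $u = v/(2r)$. Under this rescaling, the three main ingredients behave as follows uniformly on compact sets of $v$:
\[
(1+4u^2)^{rk} = (1+v^2/r^2)^{rk} \longrightarrow 1,
\qquad
(1\pm 2iu)^{r+1} = (1\pm iv/r)^{r+1} \longrightarrow e^{\pm iv},
\]
so the bracketed factor in \eqref{eq:intFormel2} tends to $(2i\sin v)^k$, while $(iu)^{-(k-2)}$ contributes an explicit factor $(2r/i)^{k-2}v^{-(k-2)}$. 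Collecting the constants $-2^{(r-2)k}/\pi$, the Jacobian $dv/(2r)$, and the identity $(2i)^k/(i/2)^{k-2} = -2^{2k-2}$ yields the predicted order $2^{rk-3}r^{k-3}/\pi$ together with the limiting integrand $\sin^k v/v^{k-2}$. A final substitution $w=2u$ in the right-hand side of \eqref{eq:rinf} confirms that this matches the claimed formula.

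The principal task is to justify the passage to the limit under the integral sign, which I would handle by a dominated-convergence argument split into two regimes. Using $|(1\pm 2iu)^{r+1}| = (1+4u^2)^{(r+1)/2}$, the modulus of the rescaled integrand is bounded, up to a $v$-independent constant times $r^{k-3}$, by $(1+v^2/r^2)^{-k(r-1)/2}|v|^{-(k-2)}$. On the ``bulk'' region $|v|\le \varepsilon r$, the inequality $\log(1+x)\ge x/2$ for $x\in[0,1]$ gives a uniform-in-$r$ Gaussian-type majorant $e^{-ck v^2}|v|^{-(k-2)}$; on the tail $|v|>\varepsilon r$, the same factor is at most $(1+\varepsilon^2)^{-k(r-1)/2}$, which beats any polynomial in $r$ and shows the tail contribution is exponentially small. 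Note that the integrand is actually $O(v^2)$ near $v=0$, since the apparent pole of $|v|^{-(k-2)}$ is killed by the vanishing of $((1+2iu)^{r+1}-(1-2iu)^{r+1})^k = O(u^k)$, so no singularity arises at the origin.

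I expect the most delicate point to be the case $k=3$, where the limiting integrand $\sin^3 v/v$ is only conditionally integrable, so absolute dominated convergence does not apply directly on the whole real line. To handle this, I would apply the identity $\sin^3 v = (3\sin v - \sin 3v)/4$ \emph{before} passing to the limit, which reduces the problem to controlling expressions of the form
\[
\int_{-\infty}^\infty \frac{\sin(\lambda v)}{v}\,\bigl(1+\eta_r(v)\bigr)\,dv
\]
with error factors $\eta_r(v)$ arising from $(1+v^2/r^2)^{rk}$ and from the difference between $(1\pm iv/r)^{r+1}$ and $e^{\pm iv}$. These corrections are uniformly small on bounded intervals and decay in the tails at the rate described above, so the bulk/tail split applies term-by-term and the limit reduces to classical Dirichlet integrals. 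This completes the proposed proof of the asymptotic formula \eqref{eq:rinf}.
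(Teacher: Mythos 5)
Your proposal follows essentially the same route as the paper: start from the representation \eqref{eq:intFormel2}, rescale $u$ by $1/r$ (your extra factor of $2$ is immaterial, and your bookkeeping of the constants checks out), and pass to the limit under the integral sign by a dominated-convergence argument. The differences lie entirely in how carefully the limit interchange is justified, and here two points deserve comment.

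First, your claimed bulk majorant is incorrect. Since $|(1\pm iv/r)^{r+1}|=(1+v^2/r^2)^{(r+1)/2}$, the modulus of the rescaled integrand carries the factor $(1+v^2/r^2)^{-k(r-1)/2}$, and the inequality $\log(1+x)\ge x/2$ only yields a bound $\exp\bigl(-k(r-1)v^2/(4r^2)\bigr)$, whose exponent is of order $v^2/r$, not $v^2$; this factor tends to $1$ for fixed $v$, so there is no uniform-in-$r$ Gaussian damping. For $k\ge4$ this does no harm: writing the oscillating factor as $2(1+v^2/r^2)^{(r+1)/2}\sin\bigl((r+1)\arctan(v/r)\bigr)$ and using $|\sin x|\le\min(1,|x|)$ gives the $r$-independent integrable majorant $C\min\bigl(v^2,|v|^{-(k-2)}\bigr)$, and dominated convergence applies exactly as the paper asserts. (Your tail estimate also needs one extra power of $(1+v^2/r^2)^{-1}\le r^2/v^2$ peeled off to make the tail integral finite, but that is routine.)

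Second, your observation about $k=3$ is correct, and it in fact exposes a gap in the paper's own one-line appeal to dominated convergence: the pointwise limit $\sin^3(2u)/u$ is not absolutely integrable, so no integrable dominating function can exist and the theorem cannot literally be cited. Your remedy --- decompose before taking the limit so that each piece is a Dirichlet-type integral --- is the right idea; concretely, with $A=(1+2iu/r)^{r+1}$ and $B=\overline{A}$ one writes $(A-B)^3=A^3-B^3-3AB(A-B)$ and notes $AB=(1+4u^2/r^2)^{r+1}$, reducing the problem to integrals $\int \sin(\lambda_r(u))\,h_r(u)\,du/u$ with $h_r\to1$, which the bulk/tail split (with the corrected, merely $e^{-cv^2/r}$, bulk bound, which still suffices since the tail carries the exponential decay) plus a standard oscillatory-integral argument handles. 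In summary: same approach as the paper, one erroneous intermediate bound that is easily repaired for $k\ge4$, and a genuinely more careful treatment of the borderline case $k=3$ that the paper glosses over.
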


\begin{proof}
We start with the integral representation \eqref{eq:intFormel2},
in which we make the substitution $u\to u/r$. This leads to
\begin{align*}
\trkr{k}{r}=
-\frac {2^{(r-2)k}\,r^{k-3}} {\pi }\int_{-\infty} ^\infty
\frac {du} {\left(1+\frac {4u^2} {r^2}\right)^{rk}
(iu)^{k-2}}
\left(
\Big(1+\tfrac {2iu} r\Big)^{r+1}
-\Big(1-\tfrac {2iu} r\Big)^{r+1}
\right)^k.
\end{align*}
Making use of dominated convergence,
we may now compute the limit of the above integral as $r\to\infty$,
\begin{align*}
\lim_{r\to\infty}\int_{-\infty} ^\infty
\frac {du} {\left(1+\frac {4u^2} {r^2}\right)^{rk}
(iu)^{k-2}}
\left(
\Big(1+\tfrac {2iu} r\Big)^{r+1}
-\Big(1-\tfrac {2iu} r\Big)^{r+1}
\right)^k
&=
\int_{-\infty} ^\infty
\frac {du} {
(iu)^{k-2}}
\left(
e^{2iu}-e^{-2iu}
\right)^k\\
&=
-2^k\int_{-\infty} ^\infty
\frac {du} {
u^{k-2}}
\sin^k(2u).
\end{align*}
The assertion of the theorem follows immediately.
\end{proof}

\begin{remark}
It is well-known that the integral in \eqref{eq:rinf} can be evaluated
for any specific~$k$, and it equals some rational multiple of $\pi$.
More precisely (cf.\ \cite[333.17]{GrHoAA} or \cite[3.821.12]{GrRyAA}),
the relations
\begin{align} \label{eq:intsinx}
\int_0^\infty \frac {\sin^\lambda(x)} {x^k}\,dx
&=\frac {\lambda} {k-1}
\int_0^\infty \frac {\sin^{\lambda-1}(x)\,\cos(x)} {x^{k-1}}\,dx,
\quad \quad \text{for }\lambda>k-1>0,\\
&=\frac {\lambda(\lambda-1)} {(k-1)(k-2)}
\int_0^\infty \frac {\sin^{\lambda-2}(x)} {x^{k-2}}\,dx
-\frac {\lambda^2} {(k-1)(k-2)}
\int_0^\infty \frac {\sin^{\lambda}(x)} {x^{k-2}}\,dx,
\notag
\\
&\kern6cm
\text{for }\lambda>k-1>1,
\end{align}
together with the ``initial conditions"
(cf.\ \cite[333.14, 333.15]{GrHoAA} or \cite[3.821.7, 3.832.15]{GrRyAA})
\begin{equation} \label{eq:int1}
\int_{-\infty} ^\infty \frac {\sin^{2k-1}(x)} {x}\,dx
=\frac {\sqrt\pi\,\Gamma(k-\frac {1} {2})} {\Gamma(k)}.
\end{equation}
and
\begin{equation} \label{eq:int2}
\int_{-\infty} ^\infty \frac {\sin^{2k-1}(x)\,\cos(x)} {x}\,dx
=\frac {\sqrt\pi\,\Gamma(k-\frac {1} {2})} {2\,\Gamma(k+1)},
\end{equation}
allow for the recursive computation of the integral in \eqref{eq:rinf}
for any specific~$k$. ({\sl Maple} and {\sl Mathematica} know about this.)
\end{remark}

\begin{theorem} \label{thm:kinf}
We have
\begin{equation} \label{eq:kinf}
\trkr{k}{r} = \frac {\big(2^r(r+1)\big)^k} {16\sqrt\pi (r(r+5)/6)^{3/2}k^{3/2}}
\big(1+o(1)\big),
\quad \quad \text{as }k\to\infty,
\end{equation}
where $r$ may or may not stay fixed.
\end{theorem}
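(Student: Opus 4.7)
The plan is to start from the integral representation
\eqref{eq:intFormel2} of Proposition~\ref{prop:saddle} and apply
Laplace's method at the saddle $u=0$. First I would rewrite the
integrand of~\eqref{eq:intFormel2} in a form that exposes its Gaussian
behaviour. Writing $1\pm 2iu=(1+4u^2)^{1/2}e^{\pm i\theta}$ with
$\theta=\arctan(2u)$ gives
\[
\big((1+2iu)^{r+1}-(1-2iu)^{r+1}\big)^k
=(2i)^k(1+4u^2)^{(r+1)k/2}\sin^k\!\big((r+1)\theta\big),
\]
and, combined with the identity $(2i)^k/(iu)^{k-2}=-2^k/u^{k-2}$,
this converts \eqref{eq:intFormel2} into
\begin{equation} \label{eq:ppsi}
\trkr{k}{r}=\frac{2^{(r-1)k}}{\pi}\int_{-\infty}^\infty
u^2\,\psi(u)^k\,du,
\qquad
\psi(u):=\frac{\sin\!\big((r+1)\arctan(2u)\big)}{u\,(1+4u^2)^{(r-1)/2}}.
\end{equation}
The function $\psi$ is even, analytic, and satisfies $\psi(0)=2(r+1)$;
the equivalent form
$\psi(u)=2\cos^r\theta\cdot\sin((r+1)\theta)/\sin\theta$, combined
with $|\sin((r+1)\theta)|\le(r+1)|\sin\theta|$, yields the pointwise
bound
\begin{equation} \label{eq:dom}
|\psi(u)|\le 2(r+1)(1+4u^2)^{-r/2},
\end{equation}
which I shall use to control the tail of the integral.

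The second step is to Taylor-expand $\log\psi$ at the origin. From
$\arctan(2u)=2u-\tfrac{8}{3}u^3+O(u^5)$ and
$(1+4u^2)^{(r-1)/2}=1+2(r-1)u^2+O(u^4)$, a routine computation gives
\[
\psi(u)=2(r+1)\!\left(1-\tfrac{2r(r+5)}{3}u^2+O(r^4u^4)\right),
\qquad
\log\psi(u)=\log\!\big(2(r+1)\big)-\alpha u^2+O(r^4u^4),
\]
with $\alpha:=2r(r+5)/3$. It is precisely this coefficient $\alpha$
(and the even parity, which forces the absence of an odd-order term)
that produces the constant $(r(r+5)/6)^{3/2}$ appearing
in~\eqref{eq:kinf}.

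The third step is the Laplace approximation itself. Substituting
$u=v/\sqrt{k\alpha}$ turns \eqref{eq:ppsi} into
\[
\trkr{k}{r}=\frac{2^{(r-1)k}(2(r+1))^k}{\pi\,(k\alpha)^{3/2}}
\int_{-\infty}^\infty v^2\!
\left(\frac{\psi(v/\sqrt{k\alpha})}{\psi(0)}\right)^{\!k}\!dv.
\]
Pointwise the integrand converges to $v^2 e^{-v^2}$ as $k\to\infty$:
the quadratic part gives $k\alpha u^2=v^2$, and the rescaled error
$k\cdot O(r^4u^4)=O\!\big(v^4\cdot r^4/(k\alpha^2)\big)=O(v^4/k)$
is $o(1)$ uniformly in $r\ge 1$, since
$r^4/\alpha^2=9r^2/(4(r+5)^2)\le 9/4$. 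Once the passage under the
integral is justified (see below), the $v$-integral tends to
$\int v^2 e^{-v^2}\,dv=\sqrt\pi/2$. Assembling all factors and using the
elementary identity $3^{3/2}/2^{5/2}=6^{3/2}/16$ recovers precisely
the right-hand side of~\eqref{eq:kinf}.

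The main technical obstacle is producing a $k$-independent dominating
function, integrable in $v$ and uniform in $r$, for the rescaled
integrand. The crude bound supplied by \eqref{eq:dom} only gives
$(1+4v^2/(k\alpha))^{-rk/2}\to e^{-3v^2/(r+5)}$, a Gaussian whose width
blows up as $r\to\infty$. I would handle this by splitting the
$v$-axis into a core $|v|\le V$ --- where the quadratic expansion of
$\log\psi$ supplies a genuine $e^{-v^2/2}$ majorant uniformly in
$(k,r)$, provided $V/\sqrt{k\alpha}$ lies within the radius on which
the $O(r^4u^4)$ error is dominated by $\tfrac{1}{2}\alpha u^2$ --- and
a tail $|v|\ge V$, where \eqref{eq:dom} together with the finite
integral $\int u^2(1+4u^2)^{-rk_0/2}\,du=O((rk_0)^{-3/2})$ for a fixed
$k_0$ with $rk_0>3$ shows the tail contribution to be exponentially
smaller than the main term. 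Carrying out this splitting uniformly
in~$r$ is the crux of the argument.
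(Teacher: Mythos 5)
Your proposal is correct and follows essentially the same route as the paper: it starts from the integral representation of Proposition~\ref{prop:saddle}, rescales $u$ by a multiple of $\sqrt{k\,r(r+5)/6}$ (your $\alpha=2r(r+5)/3$ is exactly $4R$ with $R=r(r+5)/6$ as in the paper, so the constants assemble to the identical answer), and passes to the Gaussian limit $\int_{-\infty}^{\infty}v^2e^{-v^2}\,dv=\sqrt\pi/2$ by dominated convergence. Your real rewriting of the integrand as $u^{2}\psi(u)^{k}$ with $\psi(u)=2\cos^{r}\theta\,\sin((r+1)\theta)/\sin\theta$ is a nice presentational touch, and your explicit core/tail discussion of a dominating function uniform in $r$ spells out the one step the paper's proof compresses into the words ``by dominated convergence''.
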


\begin{proof}
We start again with the integral representation \eqref{eq:intFormel2}.
Here we do the substitution
$u\to u/\sqrt{kR}$, where $R$ is short for $r(r+5)/6$.
Thereby we obtain
\begin{multline} \label{eq:kinf2}
\trkr{k}{r}
=
\frac {2^{2rk-(r+1)k}} {(kR)^{3/2}}\frac {1} {\pi }\int_{-\infty} ^\infty
\frac {u^2\,du} {(1+\frac {4u^2} {kR})^{rk}
(2iu/(kR)^{1/2})^{k}}\\
\cdot
\left(
\Big(1+\tfrac {2iu} {(kR)^{1/2}}\Big)^{r+1}
-\Big(1-\tfrac {2iu} {(kR)^{1/2}}\Big)^{r+1}
\right)^k.
\end{multline}
Once again, by dominated convergence, we may approximate the
above integral as $k\to\infty$,
\begin{align*}
\int_{-\infty} ^\infty
\frac {u^2\,du} {(1+\frac {4u^2} {kR})^{rk}
(2iu/(kR)^{1/2})^{k}}
&\left(
\Big(1+\tfrac {2iu} {(kR)^{1/2}}\Big)^{r+1}
-\Big(1-\tfrac {2iu} {(kR)^{1/2}}\Big)^{r+1}
\right)^k\\
&\kern-2cm
=
2^{k}\,(r+1)^k
\left(\int_{-\infty} ^\infty
\frac {u^2\,du} {\exp(4u^2r/R)
}
\exp\left(
\frac {r(r-1)} 6 \frac {(2iu)^2} {R}
\right)
\right)
\Big(1+o(1)\Big)\\
&\kern-2cm
=
2^{k}\,(r+1)^k\left(
\int_{-\infty} ^\infty
u^2\,e^{-4u^2}\,du\right)
\Big(1+o(1)\Big)\\
&\kern-2cm
=
2^{k}\,(r+1)^k
\frac {\sqrt\pi} {16}
\big(1+o(1)\big),
\end{align*}
as $k\to\infty.$
If this is substituted back in \eqref{eq:kinf2}, one obtains
\eqref{eq:kinf}.
\end{proof}

\section{Generalizations of the double circle
and their triangulations}\label{sec:dc}

The present research was initially motivated
by the following
open problem from computational geometry:
what is the minimum number of triangulations that a planar set of $n$ points
{in general position}\footnote{
\textit{General position} means that no three points lie on the same line.}
can have, and for which set(s) is this minimum attained?

This is one instance of the research direction
concerning the minimum and the maximum number of plane geometric non-crossing graphs of various kinds, with respect to the number of points.
One typically fixes some naturally defined class $\mathcal{C}$
of such geometric graphs
(for example, triangulations, spanning trees, perfect matchings, etc.),
and asks for the minimum or the maximum number of graphs from $\mathcal{C}$
that a planar set of $n$ points in general position (playing the role of
the vertex set)
can have, and for a characterization of point set(s) on which these
extremal values are attained.
To our knowledge, in all such cases 
no exact results concerning \textbf{maximum} were found
except for trivialities),
but rather lower and upper bounds, usually with substantial gaps
(see~\cite{adam} for a summary of some results of this type).
In contrast,
for many natural families of plane graphs,
the \textbf{minimum}
is attained for sets in convex position:
Aichholzer et al.~\cite{aich} proved that this is the case
for any class of acyclic graphs
(thus, for spanning trees, forests, perfect matchings, etc.\footnote{
For some of these families it was proven earlier by other authors,
but Aichholzer et al.\ gave a unified proof.}),
as well as for the family of all plane graphs,
and that of all connected plane graphs.
However, this is not the case for triangulations:
in~\cite{ahn}, Aichholzer, Hurtado and Noy presented a configuration,
which they called \textit{double circle}, and
which has less triangulations than sets
of the same size (that is, with the same number of points)
in convex position.
Indeed, as was shown by Santos and Seidel in~\cite{santos},
the double circle of size $n$ has $\Theta^*(\sqrt{12}^{\,n})$ triangulations.
It was proven by exhaustive computations~\cite{ak, aich16}
that, for $n \leq 15$,
(only) the double circle of size $n$
has the minimal number of triangulations
over all point sets of size $n$ in general position.
Therefore it was conjectured in~\cite{ahn} that (only)
the double circle minimizes the number of triangulations for any $n$.
As for the lower bound,
Aichholzer et al.\ recently proved that,
for all point sets of size $n$ in general position, the number of triangulations is $\Omega(2.63^n)$
(the first result of this kind, $\Omega(2.33^n)$,
was proven in~\cite{ahn}).

Next we recall the definition of the double circle of size $n$,
which we denote by $\mathrm{DC}_n$.
For the sake of simplicity, we restrict ourselves to even $n$.
In this case, $\mathrm{DC}_n$ consists of $n/2$ points,
denoted by $P_1, P_2, \dots, P_{n/2 }$, in convex position;
and $n/2 $ points,
$Q_1, Q_2, \dots, Q_{n/2}$,
such that for each $i$, $1 \leq i \leq n/2 $,
$Q_i$ lies in the interior of the convex hull of $\{P_1, P_2, \dots, P_{ n/2 }\}$, very (``infinitesimally'') close to the midpoint of $P_i P_{i+1}$\footnote{
By convention, $P_{n/2 +1}=P_1$.}.
Figure~\ref{fig:dc}(a) shows $\mathrm{DC}_{12}$ and one of its
triangulations.

\begin{figure}
\begin{center}
\includegraphics[scale=0.8]{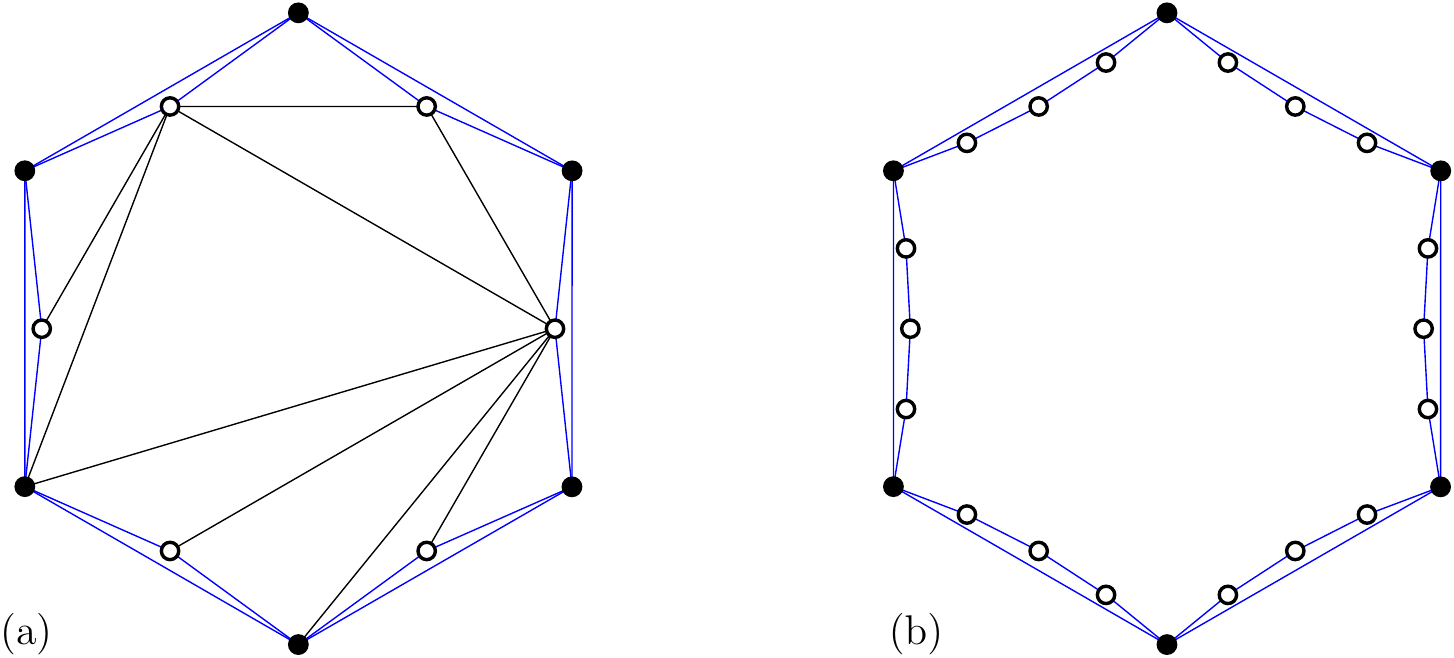}
\end{center}
\caption{(a) Double Circle of size $12$. (b) A generalized configuration.
Unavoidable edges are shown in blue colour.}
\label{fig:dc}
\end{figure}

Notice that each triangulation of $\mathrm{DC}_n$
necessarily uses the edges $Q_iP_i$ and $Q_iP_{i+1}$
for each $i$, $1 \leq i \leq n/2 $,
and, of course, all the edges that form the boundary of its convex hull.
Therefore we refer to them as \emph{unavoidable edges}.
In Figure~\ref{fig:dc}, unavoidable edges are shown in blue colour.
This observation leads to a simple bijection between
$\mathsf{TR}(\mathrm{DC}_n)$ and $\mathsf{TR}(\mathrm{SC}(n/2, 2))$:
given a triangulation of $\mathrm{DC}_n$,
move all the points $Q_i$ ``outwards'',
until they lie on the segments $P_i P_{i+1}$.
Thus, from this point of view, triangulations of $\mathrm{DC}_n$
are equivalent to triangulations of $\mathrm{SC}(n/2, 2)$, and the
above cited
bound $\mathsf{tr}(\mathrm{DC}_n)=\Theta^*(\sqrt{12}^{\,n})$
is a special case of our Theorem~\ref{thm:kinf} for
$r=2$, $k=n/2 \to \infty$.

Our goal was to investigate whether
the number of triangulations can decrease if
one inserts more points between the corners.
A similar idea, applied to the so-called \textit{double chain},
led to an improvement of the lower bound on the \emph{maximum} number of triangulations~\cite{d} and of perfect matchings~\cite{ar}.

Let us define our construction precisely.
For fixed $k$ and $r$,
we take $\mathrm{SC}(k, r)$ and slightly pull the inner points of the strings into the convex hull
so that, after this transformation, they lie
on circular arcs
of sufficiently big radius. This radius is chosen so that
the orientation of triples of points
which do not belong to the same string
is not changed.
See Figure~\ref{fig:dc}(b) for an illustration.
We refer to this construction as
\emph{indented $\mathrm{SC}(k, r)$} and denote it by $\mathrm{ISC}(k, r)$.
Notice that for $r=2$ we have the double circle:
$\mathrm{ISC}(k, 2) = \mathrm{DC}(2k)$.
Observe that the segments that connect consecutive points of a string
of $\mathrm{ISC}(k, r)$
are unavoidable for triangulations.
Together with the segments that form the boundary of the convex hull,
they split the convex hull into $k+1$ regions:
$k$ regions, each bounded by $r+1$ points in convex position,
and one region whose triangulations
are essentially equivalent to
triangulations of $\mathrm{SC}(k, r)$.
Due to this fact, the analysis of the number of triangulations
of $\mathrm{ISC}(k, r)$
is now easy: we have
$\mathsf{tr}(\mathrm{ISC}(k, r)) = \mathsf{tr}(\mathrm{SC}(k, r))\cdot C_{r-1}^{k}$.
By our asymptotic result in Theorem~\ref{thm:kinf},
we see that the exponential growth factor of the number of triangulations of
$\mathrm{SC}(k, r)$ as $k\to\infty$ --- and thus
the total number $n=kr$ of points tends to infinity ---
is $2(r+1)^{1/r}$.\footnote{
This result is also stated in~\cite{d};
however, the argument given there is non-rigorous since it relies
on~\cite[Theorem 3]{hn}
which holds for {\it fixed\/} $k$ rather than for $k \to \infty$.}
Hence
the growth factor for the number of triangulations of
$\mathrm{ISC}(k, r)$ equals
$2(r+1)^{1/r}C_{r-1}^{1/r}$.
This expression
is minimal for $r=2$, that is, for the
double circle.
If, on the other hand, we keep $k$ fixed
and let $r$ tend to infinity --- so that again
the total number $n=kr$ of points tends to infinity --- then similar
reasoning using our asymptotic result in Theorem~\ref{thm:rinf}
leads to the conclusion that the exponential growth factor
of the number of triangulations of
$\mathrm{ISC}(k, r)$ is~$8$.
Thus, somewhat disappointingly, the asymptotic count of
$\Theta^*(\sqrt{12}^{\, n})$ attained by $\mathrm{DC}(n)$
cannot be improved
by using balanced generalizations of the double circle,
in whatever way $n \to \infty$.

\smallskip

Let us return to the case of fixed $r$ and $k \to \infty$.
As stated above, the exponential growth factor in this case is
$g_r := 2(r+1)^{1/r}C_{r-1}^{1/r}$.
As $r \to \infty$, we have $(r+1)^{1/r} \searrow 1$
and $C_{r-1}^{1/r} \nearrow 4$, in both cases
monotonically for $r \geq 1$.
Thus, the fact $g_2 < g_1$ can be interpreted intuitively as follows:
when we pass
from $r=1$ to $r=2$,
the former expression decreases, while
the $k$ regions in convex position are just triangles with the unique (trivial) triangulation,
and so there is no extra factor.
On the other hand, for $r=3$ these $k$ regions are convex quadrilaterals with two triangulations,
and, as calculations above show,
their ``positive'' contribution to the total number of triangulations already dominates over
the ``negative'' contribution of the central region.
For $r \geq 3$, this tendency holds monotonically,
and, thus, $g_r$ has its minimum at $r=2$.

However, if one extends the expression $g_r$ for \textit{real} values of $r$
by using the Gamma function in the definition of Catalan numbers
(namely, $C_n = \frac{\Gamma(2n+1)}{\Gamma(n+1)\Gamma(n+2)} $),
one can observe that $g_r$ has its
minimum not at $r=2$ but rather at $r \approx 1.4957$.
This may lead to the idea that,
perhaps, 
we may get less triangulations if we
``mix" sides subdivided by one point (corresponding to $r=2$)
and non-subdivided sides (corresponding to $r=1$).
More precisely, let us consider
a subdivided convex polygon
in which $s$ sides are subdivided by one point,
all other sides are not subdivided,
and the total number of points is $N$
(where $N \geq 2s$).
We denote this partially subdivided polygon by $\mathrm{C}(N,s)$, and
its number of triangulations by $\trNk Ns$.
(Recall from the introduction that, by \cite{hn}, this number does not
depend on the specific distribution of the subdivisions among the sides
of the polygon.)

Proceeding in analogy with the inclusion-exclusion argument in
Section~\ref{sec:formula}, we observe that
the number of ways to choose $m$ pairwise non-crossing
essentially forbidden diagonals in $\mathrm{C}(N,s)$ is
$\binom{s}{m}$.
Once $m$ essentially forbidden diagonals of $\mathrm{C}(N,s)$
are chosen,
we are left with a convex $(N-m)$-gon to be triangulated.
Therefore, the number of illegal triangulations that use at least
$m$ essentially forbidden diagonals
is ${a}_{N,s,m} C_{N-m-2}$.
We apply the inclusion-exclusion principle to get
\begin{equation*}
\trNk{N}{s} = \sum_{m=0}^{ s} (-1)^m \, {a}_{N,s,m} \,
C_{N-m-2}= \sum_{m=0}^{ s} (-1)^m \, \binom sm \,
C_{N-m-2}.
\end{equation*}
Thus, the analogue of \eqref{eq:tr1} in the current context reads
\begin{equation} \label{eq:tr1A}
\trNk{N}{s}=
\frac {1} {2\pi i}\int_{\mathcal C}
\frac {dx} {2x^{N}}
(1-x)^s
\left({1-\sqrt{1-4x}}\right),
\end{equation}
where $\mathcal C$ is a small contour encircling the origin
once in positive direction. The substitution
$x= t(1-t)$, followed by the arguments used in the proof of
Proposition~\ref{prop:Formel}, turns this into
\begin{equation} \label{eq:intFormelA}
\trNk{N}{s}=
-\frac {1} {4\pi i}\int_{\mathcal C}
\frac {(1-2t)^2\,dt} {t^{N}(1-t)^{N}}
\left(
1-t+t^2
\right)^s.
\end{equation}

Deformation of the contour as described in the proof of
Proposition~\ref{prop:saddle} then leads us to the following integral
representation of $\trNk{N}{s}$.

\begin{proposition} \label{prop:saddleA}
For all positive integers $N$ and $s$ with $N\ge3$ and $N\ge 2s$, we have
\begin{equation} \label{eq:intFormel2A}
\trNk{N}{s}=
\frac {4^{N-s}\,3^s} {\pi }\int_{-\infty}^\infty
\frac {u^2\,du} {(1+4u^2)^N}
\left(
1-\tfrac {4} {3}u^2
\right)^s.
\end{equation}
\end{proposition}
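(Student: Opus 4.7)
The plan is to mimic the contour deformation argument used in the proof of Proposition~\ref{prop:saddle}, now applied to the representation \eqref{eq:intFormelA}. Its integrand is a rational function in $t$ whose only poles are at $t=0$ and $t=1$ (the factor $(1-2t)^{2}$ merely produces a zero, not a pole). First I would replace the small positively oriented contour $\mathcal C$ around the origin by the family of contours from \eqref{eq:contour}, consisting of the vertical segment $\{t:\Re(t)=1/2,\ |\Im(t)|\le\rho\}$ together with the left half-circle $\{t:|t-1/2|=\rho,\ \Re(t)\le 1/2\}$, oriented positively. For $\rho\ge 1$ this new contour still encircles $t=0$ once and leaves $t=1$ outside, so, as the integrand is holomorphic everywhere else, the value of the integral is unchanged.

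Next I would let $\rho\to\infty$. On the semicircular arc, as $|t|\to\infty$, the integrand
\[
\frac{(1-2t)^{2}(1-t+t^{2})^{s}}{t^{N}(1-t)^{N}}
\]
is of order $|t|^{2+2s-2N}$. The hypotheses $N\ge 3$ and $N\ge 2s$ together force $N\ge s+2$, so this order is at most $|t|^{-2}$, and hence the arc integral is $O(\rho^{-1})$ and vanishes in the limit. Consequently $\trNk{N}{s}$ equals $-\frac{1}{4\pi i}$ times the integral of the same integrand along the straight line $\Re(t)=1/2$, traversed upward.

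Finally I would parametrize this line by $t=\tfrac12+iu$ with $u\in\mathbb R$, so that $dt=i\,du$, and use the elementary identities
\[
(1-2t)^{2}=-4u^{2},\qquad t(1-t)=\tfrac14+u^{2}=\frac{1+4u^{2}}{4},\qquad 1-t+t^{2}=\tfrac34-u^{2}=\tfrac34\bigl(1-\tfrac{4u^{2}}{3}\bigr).
\]
Substituting these and collecting the constants $4^{N}$ (from the denominator) and $(3/4)^{s}$ (from the $(1-t+t^{2})^{s}$ factor) converts the prefactor $-\tfrac{1}{4\pi i}$ into $4^{N-s}3^{s}/\pi$ and produces exactly the right-hand side of \eqref{eq:intFormel2A}. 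The main, and indeed essentially only non-routine, point is the decay estimate on the semicircle; this is handled entirely by the combined hypotheses $N\ge 3$ and $N\ge 2s$, while every remaining step is straightforward algebra directly parallel to the substitution $t=\tfrac12+iu$ used in the proof of Proposition~\ref{prop:saddle}.
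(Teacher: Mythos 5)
Your proposal is correct and follows exactly the route the paper intends: the paper's own proof of this proposition is just the one-line remark that one deforms the contour as in Proposition~\ref{prop:saddle}, and you have carried out precisely that deformation, with the decay estimate on the semicircle (using $N\ge 3$ and $N\ge 2s$ to get $N\ge s+2$) and the substitution $t=\tfrac12+iu$ worked out correctly.
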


Finally, following the proof of Theorem~\ref{thm:kinf}, we obtain
the following asymptotic estimate for $\trNk{N}{s}$, where both
$N$ and $s$ tend to infinity under the condition of approaching a fixed ratio.

\begin{theorem} \label{thm:kinfA}
Let $\alpha$ be a real number with $0\le \alpha\le 1/2$. Then we have
\begin{equation} \label{eq:kinfA}
\trNk{N}{s} =
\frac {\big(4^{1-\alpha}3^\alpha\big)^{N}}
{16\sqrt\pi (1+\frac {\alpha} {3})^{3/2}N^{3/2}}
\big(1+o(1)\big),
\quad \quad \text{as }N,s\to\infty\text{ subject to } s/N\to\alpha.
\end{equation}
\end{theorem}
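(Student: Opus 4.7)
The plan is to follow the saddle-point analysis used in the proof of Theorem~\ref{thm:kinf}, now adapted to the integral representation of Proposition~\ref{prop:saddleA}. The integrand in~\eqref{eq:intFormel2A} has a saddle at $u=0$, and for the joint regime $N,s\to\infty$ with $s/N\to\alpha$ the appropriate rescaling is $u\to v/\sqrt{N}$, which turns \eqref{eq:intFormel2A} into
\[
\trNk{N}{s}=\frac{4^{N-s}\,3^s}{\pi\,N^{3/2}}\int_{-\infty}^\infty \frac{v^2\,dv}{(1+4v^2/N)^N}\Bigl(1-\tfrac{4v^2}{3N}\Bigr)^s.
\]
For each fixed $v$ the integrand converges pointwise as $N\to\infty$ to $v^2 e^{-4v^2}\cdot e^{-4\alpha v^2/3}=v^2 e^{-4(1+\alpha/3)v^2}$.

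The next step is to invoke dominated convergence and evaluate the resulting Gaussian integral:
\[
\int_{-\infty}^\infty v^2 e^{-4(1+\alpha/3)v^2}\,dv=\frac{\sqrt{\pi}}{16\,(1+\alpha/3)^{3/2}}.
\]
Combining this with the external prefactor and writing $4^{N-s}\,3^s=\bigl(4^{1-s/N}3^{s/N}\bigr)^N$, which under $s/N\to\alpha$ is asymptotically $(4^{1-\alpha}3^\alpha)^N$, assembles~\eqref{eq:kinfA} at once.

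The main technical obstacle lies in justifying the interchange of limit and integral, since $(1-\tfrac{4v^2}{3N})^s$ changes sign for $|v|>\sqrt{3N}/2$ and so does not admit an obvious $N$-uniform integrable envelope. I would split the integral at $|v|=\sqrt{N}/4$. On the inner region, using $\ln(1+y)\ge y-y^2/2$ one finds $(1+4v^2/N)^{-N}\le e^{-4v^2+8v^4/N}\le e^{-7v^2/2}$ when $v^2\le N/16$, while $(1-4v^2/(3N))^s\le 1$ throughout this range; this yields the integrable majorant $v^2 e^{-7v^2/2}$ required by dominated convergence. On the outer region, the hypothesis $\alpha\le 1/2$, and hence $s/N\le 1/2+o(1)$, forces $(1+4v^2/N)^{-N}\,|1-\tfrac{4v^2}{3N}|^s$ to decay exponentially in $N$ uniformly in $v$, making the outer contribution negligible compared to the Gaussian main term. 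These two estimates together complete the argument.
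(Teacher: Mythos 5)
Your proposal is correct and follows essentially the same route as the paper, which proves Theorem~\ref{thm:kinfA} simply by ``following the proof of Theorem~\ref{thm:kinf}'': rescale the saddle-point integral of Proposition~\ref{prop:saddleA} by $u\to v/\sqrt{N}$, pass to the limit by dominated convergence, and evaluate the Gaussian integral $\int_{-\infty}^{\infty}v^2e^{-4(1+\alpha/3)v^2}\,dv=\sqrt{\pi}/\bigl(16(1+\alpha/3)^{3/2}\bigr)$. Your explicit construction of the majorant $v^2e^{-7v^2/2}$ on $|v|\le\sqrt{N}/4$ and the separate treatment of the outer range is a welcome justification of the dominated-convergence step that the paper leaves implicit; the only residual imprecision, namely reading $4^{N-s}3^s$ as $(4^{1-\alpha}3^{\alpha})^N(1+o(1))$ under the sole hypothesis $s/N\to\alpha$, is inherited from the paper's own formulation of the theorem.
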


As is obvious from this asymptotic formula, the minimal exponential
growth is attained for the maximal possible $\alpha$, that is, for
$\alpha=1/2$. The corresponding polygon is again the double circle.

In summary, our results provide further
support for the conjecture
of Aichholzer, Hurtado and Noy
that, asymptotically,
the double circle yields the minimal number of
triangulations of $n$ points in general position.


\begin{thebibliography}{1}
\bibitem{aich16}
O.~Aichholzer, V.~Alvarez, T.~Hackl, A.~Pilz, B.~Speckmann, and B.~Vogtenhuber.
An improved lower bound on the number of triangulations.
To appear at The 32nd International Symposium on Computational Geometry
(SoCG 2016).

\bibitem{aich}
O.~Aichholzer, T.~Hackl, C.~Huemer, F.~Hurtado, H.~Krasser, and B.~Vogtenhuber.
On the number of plane geometric graphs.
Graphs and Combinatorics 23 (2007), 67--84.

\bibitem{ahn}
O.~Aichholzer, F.~Hurtado, and M.~Noy.
A lower bound on the number of triangulations of planar point sets.
Computational Geometry 29:2 (2004), 135--145.

\bibitem{ak}
O.~Aichholzer and H.~Krasser.
The point set order type data base: A collection of applications and results.
In Proc.\ 13th Annual Canadian Conference on Computational Geometry
(CCCG 2001), pp.~17--20, Waterloo, Ontario, Canada, 2001.

\bibitem{ar}
A.~Asinowski and G.~Rote.
Point sets with many non-crossing perfect matchings. Preprint.
\href{http://arxiv.org/abs/1502.04925}{arXiv:1502.04925}\,.

\bibitem{b}
R.~Bacher.
Counting triangulations of configurations. Preprint.
\href{http://arxiv.org/abs/math/0310206}{arXiv:math/0310206}\,.

\bibitem{bm}
R.~Bacher and F.~Mouton.
Triangulations of nearly convex polygons. Preprint.
\href{http://arxiv.org/abs/1012.2206}{arXiv:1012.2206}\,.

\bibitem{d}
A.~Dumitrescu, A.~Schulz, A.~Sheffer, and C.~D.~T\'oth.
Bounds on the maximum multiplicity of some common geometric graphs.
SIAM Journal on Discrete Mathematics 27:2 (2013), 802--826.

\bibitem{GrRyAA}
I.~S.~Gradshteyn and I.~M.~Ryzhik.
Tables of integrals, series, and products.
7th ed. Academic Press, 2007.

\bibitem{GrHoAA}
W.~Gr\"obner and N.~Hofreiter.
Integraltafel, zweiter Teil: Bestimmte Integrale.
Springer-Verlag, Wien, 1961.

\bibitem{hn}
F.~Hurtado and M.~Noy.
Counting triangulations of almost-convex polygons.
Ars Combinatoria 45 (1997),
169--179.

\bibitem{KratAB} C. Krattenthaler,
Operator methods and Lagrange inversion:
A unified approach to Lagrange formulas.
Transactions of the American Mathematical Society 305 (1988),
431--465.

\bibitem{oeis}
\textit{The On-Line Encyclopedia of Integer Sequences}, 
published electronically at \url{https://oeis.org/}\,.


\bibitem{AeqB} M.~Petkov\v sek, H.~Wilf, and D.~Zeilberger.
$A=B$. A.~K.~Peters, Wellesley, 1996.

\bibitem{santos}
F.~Santos and R.~Seidel.
A better upper bound on the number of triangulations of a planar point set.
Journal of Combinatorial Theory, Series~A 102 (2003), 186--193.

\bibitem{adam}
A.~Sheffer.
Numbers of Plane Graphs. Manuscript. Available at \newline
\url{http://adamsheffer.wordpress.com/numbers-of-plane-graphs/}\,.



\end{thebibliography}
\end{document}